\def\E{{\mathds E}}
\def\1{{\mathds 1}}
\def\N{{\mathds N}}
\def\P{{\mathds P}}
\def\R{{\mathds{R}}}
\def\cF{{\mathcal F}}
\def\cH{{\mathcal H}}
\def\cI{{\mathcal I}}
\def\g{\gamma}
\def\k{{\kappa}}
\def\l{\lambda}
\def\tr{{\vartriangle}}
\def\bm #1{{\boldsymbol{#1}}}
\def\xi{\bm X}
\newcommand{\vast}{\bBigg@{4}}
\DeclareMathOperator{\aff}{aff}
\newcommand{\dint}{\mathrm{d}}
\newtheorem{theorem}{Theorem}[section]
\newtheorem{lemma}[theorem]{Lemma}
\newtheorem{conjecture}[theorem]{Conjecture}
\theoremstyle{definition}
	\newtheorem {remark}[theorem]{Remark}
\begin{document}

\title{\bfseries Stars of Empty Simplices}

\author{Matthias Reitzner and Daniel Temesvari}

\date{}

\maketitle

\begin{abstract}
	Let $X\subset \R^d$ be an $n$-element point set in general position. 
	For a $k$-element subset $\{x_{1},\ldots,x_{k}\} \subset X$, $k\leq d$, let the degree $\deg_k(x_{1},\ldots,x_{k})$ be the number of empty simplices $\{x_{1},\ldots,x_{{d+1}}\} \subset X$ containing no other point of $X$. The $k$-degree of the set $X$, denoted $\deg_k(X)$, is defined as the maximum degree  over all $k$-element subset of $X$.
	
	We show that if $X$ is a random point set consisting of $n$ independently and uniformly chosen points from a convex body $K$ then $\deg_d(X)=\Theta(n)$, improving results previously obtained by B\'ar\'any, Marckert and Reitzner \cite{BMR13} and Temesvari \cite{Te18} and giving the correct order of magnitude with a significantly simpler proof. Furthermore, we investigate $\deg_k(X)$. In the case $k=1$ we prove that $\deg_1(X)=\Theta(n^{d-1})$ for $d \geq 3 $.
	\noindent
	\bigskip
	\\
	{\bf Keywords}. empty triangle, empty simplex, empty polytopes, simplex degree, stochastic geometry\\
	{\bf MSC 2010}. Primary: 52B05; Secondary: 52C10, 52A20, 60D05.
\end{abstract}

\section{Introduction and main results}

Let $X \subset \R^d$ be a finite point set in general position, i.e., no $k+2$ points of $X$ lie in the same $k$-dimensional affine subspace, for any $k\leq d-1$. By $ X \choose k$ we denote the set of all $k$-element subsets of $X$, by $A^o$ the interior and by $[A]$ the convex hull of a set $A\subset \R^d$. 
We call $\{x_{1},\ldots,x_{d+1}\} \in { X \choose d+1}$ an empty simplex if $[x_{1},\ldots,x_{{d+1}}]^o \cap X = \emptyset$.
For $\{x_{1},\ldots,x_{k}\} \in { X \choose k}$ we define the $k$-degree, 
 denoted by $\deg_k(x_{1},\ldots,x_{k};X)$, as the number of subsets $\{x_{{k+1}},\ldots,x_{{d+1}}\} \in X\setminus \{x_{1},\ldots,x_{k}\}$ such that $\{x_{1},\ldots,x_{{d+1}}\}$ is an empty simplex. This definition can be written as
\begin{align}\label{def:k-deg_subset}
	\deg_k(x_{1},\ldots,x_{k};X) = \sum_{ \{ x_{{k+1}},\ldots,x_{{d+1}} \} \in {X\setminus \{x_{1},\ldots,x_{k}\} \choose d-k+1}} \1([x_{1},\ldots,x_{{d+1}}]^o \cap X = \emptyset).
\end{align}
The union of these $\deg_k(x_{1},\ldots,x_{k};X)$ many empty simplices is what we call a \lq star of empty simplices\rq. 
The $k$-degree of $X$, denoted as $\deg_k(X)$, is defined as the degree of the maximal star, i.e.,
\begin{align}\label{def:k-deg_set}
	\deg_k(X) = \max_{ \{ x_{1},\ldots,x_{k} \} \in {X \choose k} } \deg_k(x_{1},\ldots,x_{k};X).
\end{align}

The quantity $\deg_d(X)$ was introduced by  Erd\H{o}s \cite{Er92}, when posing the question whether in the planar case, i.e., $d=2$, the $d$-degree of a point set $X$ goes to infinity as the number of points goes to infinity. It quickly became a question of interest to B\'ar\'any and K\'arolyi, which formulated this as a conjecture  in \cite{BK01}. This conjecture was later restated in \cite{BMP05} by Brass, Moser and Pach. However, the case of considering a deterministic point set $X \subset \R^2$ proved itself to be already quite intricate to solve and besides B\'ar\'any and K\'arolyi \cite{BK01}, showing that $deg_d(X)\geq 10$ for a sufficiently large number of points, and B\'ar\'any and Valtr \cite{BV04}, giving a construction for a set $X$ with $n$ points in general position such that $\deg_d(X)=4\sqrt{n}(1+o(1))$, no further progress has been made on this, let alone on the general case of $X \subset \R^d$.

In \cite{BMR13} B\'ar\'any, Marckert and Reitzner turned their attention to random point sets $\xi_n \subset \R^2$ consisting of $n$ independent and uniformly chosen points from a convex body $K \subset \R^2$, i.e., a compact, convex set with nonempty interior. They showed that in fact the degree tends to infinity. For sufficiently large $n$ the assertion holds true in expectation, 
$$\E \deg_d (\xi_n) \geq c(d) (\ln n)^{-1} \, n , $$
and there is convergence in probability, i.e., 
$$\deg_d(\xi_n) \overset{\P}{\to} + \infty$$
as $n \to \infty$. 
Observe that this lower bound for the expectation is surprisingly close to the trivial upper bound $(n-d)$, up to a logarithmic factor.
Temesvari \cite{Te18} generalized their proof ideas to arbitrary dimension $d$ and all moments of $\deg_d(\xi_n)$, giving the bound $\E \deg_d(\xi_n)^k \geq c(d) (\ln n)^{-1} n^k$ for sufficiently large $n$.

The first part of this paper is concerned with improving the lower bound for $\deg_d(\xi_n)$. In fact we are able to remove the logarithmic factor completely and determine the asymptotic order  with a significantly simpler proof than in \cite{BMR13} and \cite{Te18}. Thus the expected degree of a uniform random point set is surprisingly large: there is a star of empty simplices where the number of spikes is at least a constant proportion of \emph{all} random points.

\begin{theorem}\label{th:deg_d}
Let  $\xi_n$ be a set of $n$ independent and uniformly chosen random points from a convex body $K\subset \R^d$. Then there exists a constant $c(d,K)>0$ such that 
	\[
	c(d,K) n \leq \E \deg_d (\xi_n) \leq n .
	\]
\end{theorem}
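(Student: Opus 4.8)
The plan is to prove the two bounds separately. The upper bound $\E\deg_d(\xi_n)\le n$ is essentially trivial: for any fixed $d$-subset $\{x_1,\dots,x_d\}$, the degree counts subsets $\{x_{d+1}\}$ (a single extra point) making an empty simplex, so $\deg_d(x_1,\dots,x_d;X)\le n-d$, and hence the maximum over all $d$-subsets is at most $n-d\le n$ deterministically; taking expectations preserves this. So the entire work lies in the lower bound.

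For the lower bound my strategy is \emph{not} to analyse the maximum directly, but to exhibit one specific, well-chosen $d$-tuple of points and show its expected degree is already $\Omega(n)$; since the maximum dominates any particular choice, $\E\deg_d(\xi_n)\ge\E\deg_d(x_1,\dots,x_d;\xi_n)$ for a cleverly selected (possibly random but location-conditioned) edge. The natural candidate is a $(d-1)$-face (an ``edge'' spanned by $d$ points) located near the boundary of $K$, or more specifically a pair/tuple of points lying very close together in a thin boundary region. The heuristic is that if the $d$ base points are clustered in a small cap near $\partial K$, then the simplices they span together with a $(d+1)$-st point sweep out thin ``sliver'' regions, each of which has a good chance of being empty, and there are linearly many candidate apex points producing empty slivers. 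I would first \emph{condition} on the positions of the $d$ base points falling into a suitably small region $A\subset K$ (with $\P$ bounded below by a constant), and then compute the conditional expected number of empty simplices over this base using the remaining $n-d$ points.

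The core computation is the following. Conditioned on the base $\{x_1,\dots,x_d\}$, the conditional expected degree equals
\begin{align}\label{plan:expectation}
	\E\big[\deg_d(x_1,\dots,x_d;\xi_n)\,\big|\,x_1,\dots,x_d\big]
	= (n-d)\int_K \P\big([x_1,\dots,x_d,y]^o\cap\{\text{other }n-d-1\text{ pts}\}=\emptyset\big)\,\frac{\dint y}{\operatorname{vol}_d(K)},
\end{align}
and the emptiness probability is $\big(1-\operatorname{vol}_d([x_1,\dots,x_d,y])/\operatorname{vol}_d(K)\big)^{n-d-1}$ by independence. To make this $\Omega(n)$ after the factor $(n-d)$, I need the set of apexes $y$ for which the simplex volume is $O(1/n)$ to have $d$-volume bounded below by a constant. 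The key geometric input is that when the base points are pinched into a thin slab of volume $\approx$ small, the simplices $[x_1,\dots,x_d,y]$ have volume proportional to the base $(d{-}1)$-content times the height of $y$, so a constant fraction of apexes $y\in K$ yield volume $\le C/n$; for those, $(1-\operatorname{vol}/\operatorname{vol}_d(K))^{n}$ stays bounded below by a positive constant. The integral in \eqref{plan:expectation} therefore contributes a constant, and multiplying by $(n-d)$ gives the desired $\Omega(n)$.

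The main obstacle will be organising the geometry so that the base simplex has simultaneously small $d$-volume \emph{and} a non-negligible probability of occurring, while a constant proportion of apexes $y$ produce total volume $O(1/n)$. Concretely I would choose the base to be $d$ points in a flat ``pancake'' region: pick a small ball $B$ near a smooth boundary point where $K$ locally looks like a halfspace, force the $d$ base points into a flattened ellipsoidal neighbourhood $A$ (thin in one direction, giving tiny $(d{-}1)$-content $\approx\e$), so that for a constant fraction of $y$ the simplex volume is $\approx \e\cdot(\text{height})=O(1/n)$ once $\e\asymp 1/n$. Balancing $\e\asymp n^{-1}$ so that $\P(\text{base}\in A)\asymp \e^{\,d-1}\cdot(\text{something})$ does not collapse the constant is the delicate point; the cleanest route is to let the base region shrink with $n$ and track that $(n-d)\,\P(\text{base}\in A_n)\cdot(\text{const})$ remains $\Omega(n)$, i.e.\ $\P(\text{base}\in A_n)$ stays bounded below. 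I expect the honest version requires a careful Blaschke--Petkantschin-type change of variables to factor the base $(d{-}1)$-content out of the probability, after which the emptiness exponential is handled by the standard estimate $(1-t)^m\ge e^{-2mt}$ for small $t$, and the remaining integrals reduce to constants depending only on $d$ and the local geometry of $K$.
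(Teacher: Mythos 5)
Your upper bound is fine, and the geometric heart of your lower bound --- a base whose $(d-1)$-content is $O(1/n)$ makes \emph{every} apex $y\in K$ span a simplex of volume $O(1/n)$, so each of the $n-d$ remaining points is the tip of an empty simplex with probability bounded below, giving conditional degree $\Omega(n)$ --- is exactly the mechanism the paper uses. But your selection scheme cannot work. You condition a \emph{fixed labelled} $d$-tuple $X_1,\dots,X_d$ to land in a region $A_n$ and require simultaneously (i) $\P(X_1,\dots,X_d\in A_n)$ bounded below by a constant and (ii) the spanned $(d-1)$-simplex to have content $O(1/n)$. These requirements are incompatible. First, (ii) forces $A_n$ to be small in \emph{all} directions: your ``pancake'' intuition is wrong, because a region thin in one direction but of constant width $L$ in the others produces bases whose $(d-1)$-content is at least the content of their orthogonal projection onto the wide hyperplane, hence typically of order $L^{d-1}$, a constant; the simplex volume $\frac{1}{d}\,S\,h$ is then of constant order for generic apexes and the emptiness probability is $e^{-\Theta(n)}$. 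So you need $\mathrm{diam}(A_n)=O(n^{-1/(d-1)})$, whence $\lambda_d(A_n)=O(n^{-d/(d-1)})$ and
\begin{equation*}
\P(X_1,\dots,X_d\in A_n)=\left(\frac{\lambda_d(A_n)}{\lambda_d(K)}\right)^{d}=O\left(n^{-d^2/(d-1)}\right),
\end{equation*}
so your final bound is $n\cdot O(n^{-d^2/(d-1)})\to 0$, not $\Omega(n)$. Placing $A_n$ near the boundary of $K$ does not help; the location is irrelevant to this counting problem.

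The missing idea is to let the cluster float rather than fixing its location: count \emph{all} $d$-tuples of $\xi_n$ whose pairwise distances are at most $(\gamma n)^{-1/(d-1)}$. Since the first point of such a tuple may fall anywhere in $K$, a given tuple is a cluster with probability $\Theta(n^{-d})$, so the number $N_{\gamma n}$ of clusters has constant (nonvanishing) expectation, and with probability bounded below there is one. The maximum degree then dominates the degree of \emph{that} data-dependent base, which is $\Omega(n)$ by your own volume argument. This forces a second ingredient your plan lacks entirely: because the base is selected using the configuration, its emptiness events are correlated with the event used to select it, and this bias must be controlled. The paper does so with the pointwise bound $\deg_d(X)\geq F_{\gamma n}(X)\,\1(N_{\gamma n}(X)=1)$ together with a Poisson approximation for $N_{\gamma n}$ (Barbour--Eagleson, for dissociated indicators), applied to the point set conditioned to avoid a neighbourhood of the chosen simplex, which shows that the factor $\P(N_{\gamma n}=1,\ \text{simplex empty})$ stays bounded below uniformly in the base. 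Without both the floating-cluster count and this decoupling step, the argument does not close.
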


\begin{remark}
In the planer case, computations allow to derive an explicit value for $c(d,B^2)$, namely 
\begin{align*}
0,11  \, n  \  (1+o(1)) \leq \E \deg_2(\xi_n) ,
\end{align*}
see \eqref{eq:constd=2}. 
Numerical computations by M. Meckes and by D. Temesvari suggest that the optimal constant may be surprisingly large. For $K=B^2$ or the planar square the results  suggest that 
$$ \frac {\E \deg_2 (\xi_n) }n \in [0.70, 0.95] . $$
\end{remark}

\begin{remark}\label{deg_d_moments}
	By the trivial bound $\deg_d \xi_n \leq n$, on the one hand, and Jensen's inequality in conjunction with Theorem \ref{th:deg_d}, on the other hand, we immediately get the asymptotic behavior of all the moments of $\deg_d \xi_n$. Namely, we have
	\[
			\E \deg_d (\xi_n)^k = \Theta(n^k)
	\]
	as $n \to \infty$.
\end{remark}

\begin{figure}[H]
	\centering
	\includegraphics[scale=0.41, bb=50 95 420 370, clip=true]{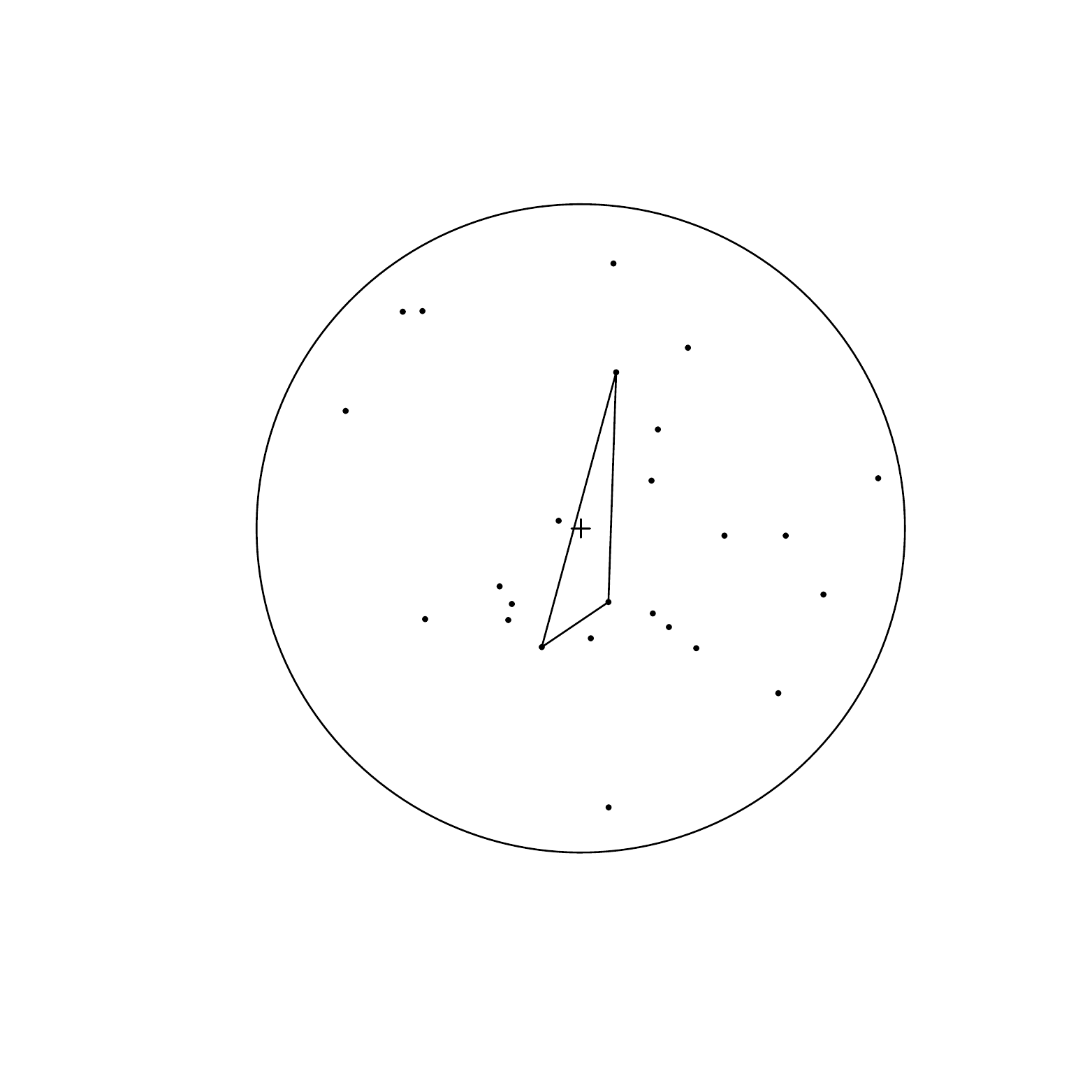}
	\includegraphics[scale=0.41, bb=50 95 420 370, clip=true]{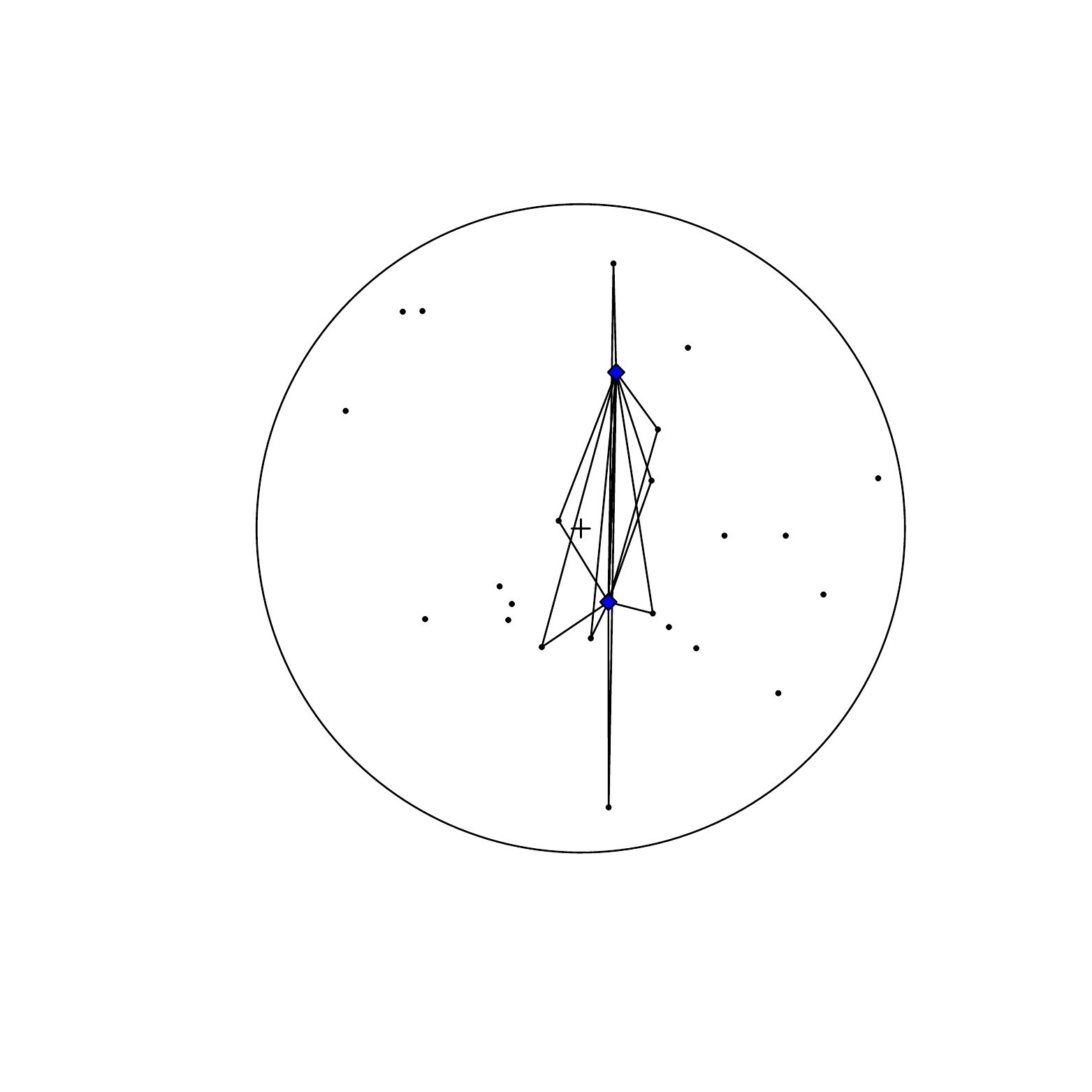}
	\includegraphics[scale=0.41, bb=50 95 420 370, clip=true]{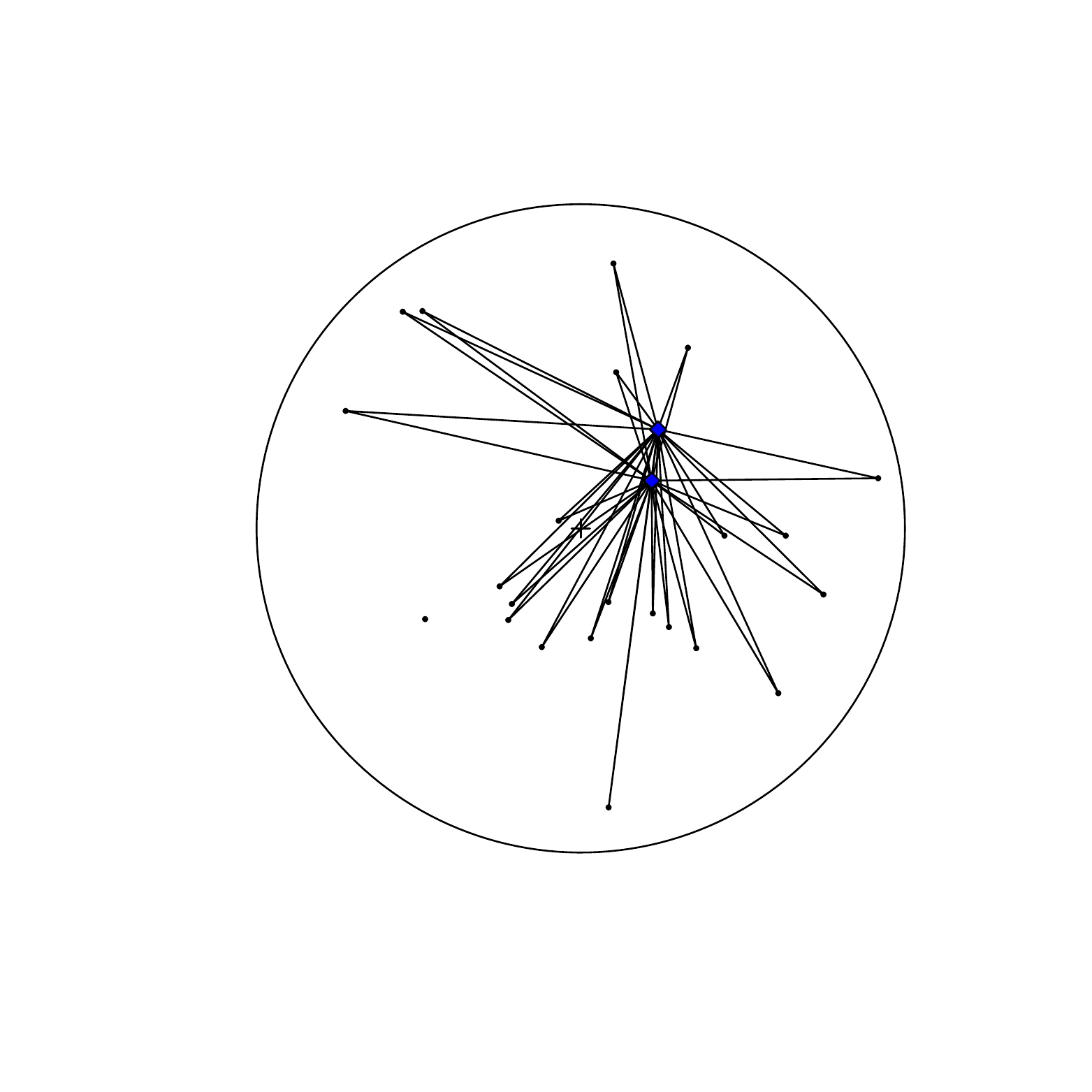}
	\caption{Point set with 25 point. Left: Instance of an empty triangle. Middle: Degree of this basis is 8. Right: Degree of this basis is 22 and is also the degree of the whole point set.}
	\label{simulation2}
\end{figure}

It is of interest to compare the maximal degree to the degree of a typical base, i.e. a typical $d$-tuple of points.
For uniform random points in a planar convex body $K$ the expected number of empty triangles $N_{\tr}(\xi_n)$ is asymptotically bounded by $2n^2$, as was shown by Valtr~\cite{va}. 
In general dimensions $d \geq 3$, a result by B\'ar\'any and F\"uredi \cite{BF87} states that there exists a constant $c(d)>0$ such that the expected number of empty simplices in a uniform random point set satisfies $ N_{\tr} (\xi_n) \leq c(d) n^{d}$. Our next theorem strengthens these results by describing the asymptotic behavior of $N_{\tr}(\xi_n)$, as $n$ goes to infinity, in terms of lower and upper bounds of order $n^d$. Furthermore, in the case $d=2$, this gives rise to an exact asymptotic of $N_{\tr}(\xi_n)$.
In the following, $\kappa_d:=\pi^\frac d2\Gamma\left(\frac d2 +1\right)^{-1}$ is the volume of the $d$-dimensional Eucidean unit ball.

\begin{theorem}\label{th:emptytr}
	Let  $\xi_n$ be a set of $n$ independent and uniformly chosen random points from a convex body $K\subset \R^d$. Then 
$$
\frac 2{d!} \leq 
\lim_{n \to \infty} n^{-d} \E N_\tr (\xi_n)
\leq 
\frac {d  }{(d+1)}  \frac {\k_{d-1}^{d+1}  \k_{d^2}}{\k_d^{d-1}  \k_{(d-1)(d+1)}} ,
$$
for $d \geq 3$, and for $d=2$ 
$$
\lim_{n \to \infty} n^{-2} \E N_\tr (\xi_n)
= 2 .
$$
More precisely, 
\[
	\lim_{n \to \infty} n^{-d} \E N_\tr(\xi_n)= \frac {d \k_d} {(d+1)} \l_d(K)^{-d} \int_{\cH_d} \l_{d-1} (K \cap H) ^{d+1} \dint H  .
\]
\end{theorem}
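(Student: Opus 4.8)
The plan is to compute $\E N_\tr(\xi_n)$ exactly by linearity of expectation and then extract the leading asymptotics by a Blaschke--Petkantschin decomposition combined with a Laplace-type estimate. First I would write, using exchangeability and conditioning on the $d+1$ chosen points,
\[
\E N_\tr(\xi_n) = \binom{n}{d+1}\,\l_d(K)^{-(d+1)}\int_{K^{d+1}}\Big(1-\tfrac{V(x_1,\ldots,x_{d+1})}{\l_d(K)}\Big)^{n-d-1}\dint x_1\cdots\dint x_{d+1},
\]
where $V$ denotes the volume of the simplex $[x_1,\ldots,x_{d+1}]$; the factor $(1-V/\l_d(K))^{n-d-1}$ is exactly the probability that none of the remaining $n-d-1$ points falls into the open simplex. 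Since this factor tends to $0$ unless $V\to 0$, the integral concentrates on nearly degenerate (small-volume) simplices, which is what forces the order $n^{d}$ and singles out the hyperplane sections of $K$.

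The key step is to separate the apex $x_{d+1}$ from the base $x_1,\ldots,x_d$ and apply the affine Blaschke--Petkantschin formula to the base points, introducing the hyperplane $H=\aff(x_1,\ldots,x_d)$, the base volume $V_{d-1}=\l_{d-1}([x_1,\ldots,x_d])$ (with Jacobian weight a constant times $V_{d-1}$), and writing $V=\tfrac1d V_{d-1}\,h$ with $h=\mathrm{dist}(x_{d+1},H)$. For fixed $H$ and base the apex integral is evaluated by Laplace's method: replacing $(1-V/\l_d(K))^{n-d-1}$ by $e^{-nV/\l_d(K)}$ and the slice $\{x\in K:\mathrm{dist}(x,H)=h\}$ by the central section (whose $(d-1)$-volume converges to $\l_{d-1}(K\cap H)$ as $h\to0$), so that integrating over both sides of $H$ produces a factor asymptotic to $2\,\l_{d-1}(K\cap H)\cdot\frac{d\,\l_d(K)}{n\,V_{d-1}}$. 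The point is that this cancels the Jacobian weight $V_{d-1}$ exactly; using $\int_{(K\cap H)^d}\dint x_1\cdots\dint x_d=\l_{d-1}(K\cap H)^{d}$ and collecting the prefactors (the constant being pinned down by the normalisation of the invariant measure $\dint H$ on $\cH_d$) yields precisely the claimed limit $\frac{d\k_d}{d+1}\l_d(K)^{-d}\int_{\cH_d}\l_{d-1}(K\cap H)^{d+1}\dint H$.

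From this exact formula the remaining assertions are geometric. For $d=2$ the section integral collapses by the classical identity $\int_{\cH_2}\l_1(K\cap H)^{3}\dint H=3\,\l_2(K)^2$ (itself a one-line consequence of the planar two-point Blaschke--Petkantschin formula, since $\int_0^\ell\int_0^\ell|s-t|\,\dint s\,\dint t=\ell^3/3$ while the double integral of $1$ over $K^2$ equals $\l_2(K)^2$), which is \emph{independent of} $K$ and gives the universal constant $2$. For $d\ge3$ the functional $\l_d(K)^{-d}\int_{\cH_d}\l_{d-1}(K\cap H)^{d+1}\dint H$ is a genuinely affinely invariant quantity, and the two numerical bounds come from estimating it: the lower bound $2/d!$ follows from a slicing argument (for each normal direction, Jensen's inequality applied to the convex map $t\mapsto\l_{d-1}(K\cap H_t)^{d+1}$ bounds the one-dimensional integral below by a width-normalised power of $\l_d(K)$, after which one integrates over directions), whereas the upper bound is an affine isoperimetric, Busemann-type inequality whose extremiser is the ellipsoid; evaluating the section integral on the Euclidean ball, where the sections are balls of radius $(1-t^2)^{1/2}$ and the resulting beta integral produces the ratios $\k_{d^2}/\k_{(d-1)(d+1)}$ together with $\k_{d-1}^{d+1}/\k_d^{d-1}$, gives the explicit constant.

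The main obstacle is the rigorous justification of the Laplace step. One has to exhibit a dominating function legitimising the replacements $(1-V/\l_d(K))^{n}\rightsquigarrow e^{-nV/\l_d(K)}$ and $(\text{slice area})\rightsquigarrow\l_{d-1}(K\cap H)$ uniformly, and in particular to control the degenerate regime in which the base volume $V_{d-1}$ is itself of order $1/n$: there $nV/\l_d(K)$ need not be large even for macroscopic $h$, so the apex integral is not well approximated by its exponential surrogate, and one must show this region contributes only to lower order. Handling the hyperplanes meeting $K$ in very small sections, and confirming that the Busemann-type inequality applies in exactly the affine, $(d+1)$-st power form required, are the other points that need care; everything else is bookkeeping of constants.
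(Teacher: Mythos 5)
Your computation of the exact limit is essentially the paper's own argument: condition on the $d+1$ points, split the apex from the base, write the simplex volume as $\tfrac1d\l_{d-1}([x_1,\ldots,x_d])\,|h|$, integrate the apex over parallel slices, and apply the Blaschke--Petkantschin formula so that the Jacobian $\l_{d-1}([x_1,\ldots,x_d])$ cancels the factor $\l_{d-1}([x_1,\ldots,x_d])^{-1}$ produced by the apex integration. The ``Laplace step'' you flag as the main obstacle is handled in the paper simply by substituting $h=nh_{d+1}$ and invoking dominated convergence: after the bound $(1-x)^{n-d}\le e^{-(n-d)x}$ the integrand is dominated by a constant times $e^{-c|h|\,\l_{d-1}([x_1,\ldots,x_d])}$, the resulting factor $\l_{d-1}([x_1,\ldots,x_d])^{-1}$ is integrable over $K^d$ (again by Blaschke--Petkantschin), and degenerate bases form a null set, so no separate treatment of the regime $V_{d-1}\sim 1/n$ is needed. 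The upper bound for $d\ge3$ and the value for $d=2$ come from the same classical inequality you invoke. One bookkeeping slip: under the normalization of the measure on $\cH_d$ for which Lemma~\ref{lem:BPF} holds with $b_2=\pi$ (and hence for which the limit formula is stated), the planar identity reads $\int_{\cH_2}\l_1(K\cap H)^3\,\dint H=\tfrac3\pi\,\l_2(K)^2$, not $3\,\l_2(K)^2$; with your constant the limit would come out as $2\pi$ rather than $2$.

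The genuine gap is the lower bound $2/d!$. You propose to prove the integral-geometric inequality $\int_{\cH_d}\l_{d-1}(K\cap H)^{d+1}\dint H\ge\frac{2(d+1)}{d!\,d\,\k_d}\l_d(K)^d$ directly by slicing and Jensen. But the paper explicitly remarks that this inequality ``seems to be new'': there it is a \emph{corollary} of the theorem, not an ingredient, so it cannot be dismissed as a routine slicing computation. Concretely, your Jensen step (which is Jensen for $s\mapsto s^{d+1}$ against the uniform measure on the chord --- the map $t\mapsto\l_{d-1}(K\cap H_{u,t})^{d+1}$ is \emph{not} convex in $t$) gives $\int_\R\l_{d-1}(K\cap H_{u,t})^{d+1}\dint t\ge\l_d(K)^{d+1}w(u)^{-d}$, where $w(u)$ is the width, and after integrating over directions you are left needing the width--volume inequality $\l_d(K)\int_{S^{d-1}}w(u)^{-d}\dint u\ge\frac{2(d+1)}{d!}$ for \emph{every} convex body $K$. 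This is itself a nonstandard inequality; it holds with large margin for balls and for degenerate (pancake- or needle-like) bodies, but your sketch offers no proof of it in general, so you have merely replaced one unproven inequality by another. The paper's route is entirely different and much simpler: for any $d$-element subset of $\xi_n$, the point of $\xi_n$ closest to its affine hull on a given side forms with it an empty simplex (a point interior to that simplex would be strictly closer to the hull, contradicting minimality), so every base extends to at least two empty simplices unless it is a facet of the convex hull of $\xi_n$, and the expected number of hull facets is $o(n^d)$; hence $\E N_\tr(\xi_n)\ge 2\binom{n}{d}(1+o(1))=\frac{2}{d!}n^d(1+o(1))$. This combinatorial bound needs no integral geometry at all, and it is precisely what, combined with the limit formula, \emph{yields} the new integral inequality rather than relying on it.
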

Here $\l_d$ denotes Lebesgue measure in $\R^d$, and the integration $\int_{\cH_d} \dots \dint H$ on the space $\cH_d$ of affine hyperplanes in $\R^d$ is with respect to the rigid motion invariant Haar measure. 

We want to point out that the resulting inequality 
$$
\int_{\cH_d} \l_{d-1} (K \cap H) ^{d+1} \dint H  
	\geq \frac {2(d+1)}{d!d \k_d} \l_d(K)^{d} 
$$
seems to be new.

In the planar case  we have $ \E N_\tr(\xi_n)= 2 n^2 \ (1+o(1))$, the number of pairs of points is ${n \choose 2}$ and each triangle has three edges. This yields that the degree of a typical pair of points is
$$
\E \deg_2(X_1, X_2; \xi_n) = 12 (1+o(1))
$$
as $n \to \infty$. Since, in general dimensions, there are ${n \choose d}$ simplices of dimension $(d-1)$ and $c(d,K) n^{d} (1+o(1))$ empty $d$-dimensional simplices, the typical degree again is constant,
$$ \E \deg_d(X_1, \dots , X_d; \xi_n) = c(d,K) (1+o(1)) . $$
Here and in the following $c(d)$ and $c(d,K)$ denote generic constants depending on the dimension $d$, respectively the set $K$ and the dimension $d$, whose precise values may differ from line to line. 
Thus, in all dimensions the expected maximal degree $ \E \deg_d (\xi_n) = \Theta(n)$ is surprisingly far from the typical degree, which is a constant.

In the second part of this work we are again concerned with the question regarding the asymptotic behavior of the degree, but this time posed for the newly introduced quantity $\deg_k(\xi_n)$, $k=1, \ldots, d-1$.  Here, one easily sees that ${n \choose d-k+1} \leq n^{d-k+1}$ is a trivial upper bound on $\deg_k(\xi_n)$. On the other hand, because there are ${n \choose k}$ simplices of dimension $(k-1)$ and $c(d,K) n^{d} (1+o(1))$ empty simplices, the typical degree is of order $ n^{d-k}$, which is a lower bound for $\deg_k(\xi_n)$.
In contrast to the case  $k=d$ where the \emph{upper} bound gives the correct order, we are showing that for the case $k=1$  and $d \geq 3 $ the \emph{lower} bound gives indeed the correct asymptotic behavior. In the case of $k=1$ and $d=2$ we get an additional logarithmic term.
\begin{theorem}\label{deg_k}
Let  $\xi_n$ be a set of $n$ independent and uniformly chosen random points from a convex body $K\subset \R^d$. Then there are constants $c(d), c(d,K)$ such that
\begin{itemize}
	\item[(1)] for all $d>2$:
	$$ c(d) n^{d-1} \leq  \E \deg_1 \xi_n \leq c(d,K) n^{d-1}, $$
	\item [(2)] for $d=2$:
	$$ c n \leq  \E \deg_1 \xi_n \leq c(K) n\ln^\frac 12 n .$$
\end{itemize}
\end{theorem}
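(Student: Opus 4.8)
The lower bounds in both (1) and (2) follow from a single averaging step together with Theorem~\ref{th:emptytr}. Summing the single-vertex degree over all points, every empty simplex is counted once for each of its $d+1$ vertices, so $\sum_{i=1}^{n}\deg_1(X_i;\xi_n)=(d+1)\,N_\tr(\xi_n)$. Since the maximum dominates the average,
$$
\deg_1(\xi_n)\ge \frac1n\sum_{i=1}^n\deg_1(X_i;\xi_n)=\frac{d+1}{n}\,N_\tr(\xi_n),
$$
and taking expectations gives $\E\deg_1(\xi_n)\ge \frac{d+1}{n}\E N_\tr(\xi_n)$. By Theorem~\ref{th:emptytr} the right-hand side is at least $c(d)\,n^{d-1}$ for $d\ge 3$ and at least $c\,n$ for $d=2$, which are exactly the claimed lower bounds.

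For the upper bounds I fix the apex and pass to moments. Conditioning on $X_1=x$, the $n-1$ remaining points are independent uniform, and the standard integral-geometric identity together with $(1-t)^m\le e^{-mt}$ gives, with $g_n(x):=\E[\deg_1(x;\xi_n)\mid X_1=x]$,
$$
g_n(x)=\binom{n-1}{d}\E\Big[\Big(1-\tfrac{V}{\l_d(K)}\Big)^{n-1-d}\Big]\le \binom{n-1}{d}\E\big[e^{-(n-1-d)V/\l_d(K)}\big],
$$
where $V$ is the volume of the simplex spanned by $x$ and $d$ independent uniform points. The small simplices dominate, and an analysis of the distribution of $V$ near $0$ shows that its density at the origin is finite uniformly in $x\in K$, whence $\E[e^{-cnV}]\le c(d,K)/n$ and $\sup_{x\in K}g_n(x)\le c(d,K)\,n^{d-1}$.

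To pass from this conditional mean to the maximum over the $n$ apices I use that all $\deg_1(X_i;\xi_n)$ are identically distributed, so for any $p\ge 1$,
$$
\E\deg_1(\xi_n)=\E\max_i\deg_1(X_i;\xi_n)\le\Big(n\,\E\deg_1(X_1;\xi_n)^p\Big)^{1/p}.
$$
Expanding $\deg_1(X_1;\xi_n)^p$ as a sum over $p$-tuples of $d$-subsets sharing the apex, and bounding the probability that all $p$ associated simplices are simultaneously empty by $e^{-(n-1-m)\l_d(\mathrm{union})/\l_d(K)}$ with $m$ the number of distinct points used, one groups the terms according to the overlap pattern of the simplices. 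The terms with disjoint simplices reproduce $g_n(x)^p$, and the overlapping terms form the fluctuation contribution. Carrying this bookkeeping through is designed to produce a bound of sub-Gaussian type, $\E\deg_1(X_1;\xi_n)^p\le\big(c(d,K)\,n^{d-1}+c(d,K)\,\sigma_n\sqrt{p}\big)^{p}$, where $\sigma_n$ is the order of the conditional standard deviation; choosing $p=\lceil\ln n\rceil$ then yields $\E\deg_1(\xi_n)\le c(d,K)\,n^{d-1}+c(d,K)\,\sigma_n\sqrt{\ln n}$.

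The main obstacle is precisely the estimation of these overlap contributions, that is, controlling $\sigma_n$ and, more generally, the growth of the correlated moments uniformly in $x$. I expect a dimension dichotomy to be the crux: for $d\ge 3$ the relative fluctuation should be of strictly smaller order than the mean, with $\sigma_n\sqrt{\ln n}=o(n^{d-1})$, so the second term is absorbed and one recovers the sharp order $c(d,K)\,n^{d-1}$; whereas for $d=2$ the empty triangles on angularly consecutive pairs already contribute $\Theta(n)$ with fluctuations of the same order, forcing $\sigma_n=\Theta(n)$, so that the residual factor $\sqrt{\ln n}$ cannot be removed by this argument and one is left with the stated bound $c(K)\,n\ln^{\frac12}n$. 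Making the overlap estimates precise and uniform over all admissible apices is the technical heart of the proof.
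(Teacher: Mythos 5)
Your lower bound is correct and is exactly the paper's argument: the sum of $\deg_1(X_i;\xi_n)$ over all points counts each empty simplex $d+1$ times, the maximum dominates the average, and Theorem \ref{th:emptytr} supplies $\E N_\tr(\xi_n)\geq \frac{2}{d!}n^d(1+o(1))$. Your uniform bound on the conditional mean, $\sup_x \E[\deg_1(x;\xi_n)\mid X_1=x]\leq c(d,K)n^{d-1}$, is also sound and is essentially the paper's estimate \eqref{eq:boundmaxE} (the paper proves the finiteness of the relevant quantity via the Blaschke--Petkantschin formula, which is precisely how one makes your ``density of $V$ at the origin is finite'' claim rigorous). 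The genuine gap is in the passage from the conditional mean to the maximum. You reduce everything to a sub-Gaussian moment bound $\E\deg_1(X_1;\xi_n)^p\leq\bigl(c\,n^{d-1}+c\,\sigma_n\sqrt{p}\bigr)^p$ with $p\approx\ln n$, but this bound is never proved --- you explicitly defer it (``is designed to produce'', ``I expect'', ``the technical heart''). That estimate is not a routine bookkeeping exercise: it requires controlling \emph{all} moments of the correlated sum of emptiness indicators with $\sqrt p$ growth, uniformly in the apex, which is strictly harder than anything in the paper. The paper deliberately avoids needing such concentration: it uses Aven's inequality (Lemma \ref{Aven}) with the conditional expectations as comparison variables and only $p=2$, then the Rhee--Talagrand/Efron--Stein type inequality (Lemma \ref{Rhee}) to convert the variance of $\deg_1(X_1;\xi_n)$ into the second moment of the add-one-point cost, i.e.\ of $\sum_i \1([X_i,X_n,X_{n+1}]^o\cap\xi_{n+1}=\emptyset)$. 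Bounding \emph{that} second moment by $c(d,K)n^{2d-4}$ is where the real work sits: pairs of simplices sharing the two points $X_n,X_{n+1}$ but no others must have joint emptiness probability $O(n^{-2})$, which the paper gets from the two-hyperplane Blaschke--Petkantschin formula (Lemma \ref{lem:HugReitzner}) together with the finiteness of $\int(\sin\varphi)^{-2}$ over pairs of hyperplanes meeting $K$ (Lemma \ref{le:valuesin}), valid only for $d\geq3$. Your proposal contains no substitute for this pair-correlation analysis.

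Your diagnosis of the $d=2$ case is also off. The logarithm does not arise because $\sigma_n=\Theta(n)$; in fact the paper's estimates give the much smaller bound $\sigma_n\leq c\sqrt{n\ln n}$. The $\ln n$ enters through the pair-correlation integral for two triangles sharing a short edge: the singularity $\l_1([x_2,x_3])^{-1}$ produces $\int_0^D\bigl(1-e^{-cnt}\bigr)^2 t^{-1}\,\dint t = \Theta(\ln n)$, so the second moment of the add-one cost is $\Theta(\ln n)$ rather than $O(1)$, and Aven's inequality then yields the fluctuation term $n\ln^{1/2}n$. Ironically, if your sub-Gaussian bound were true with $\sigma_n\approx\sqrt{n\ln n}$, your method would give $\E\deg_1(\xi_n)\leq cn$ for $d=2$, i.e.\ a \emph{better} bound than the paper's --- which should itself make you suspicious that the claimed concentration is far from free. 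As it stands, the upper-bound half of your proposal is a plan whose decisive step is missing, not a proof.
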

It is unclear for us whether the logarithmic factor for $d=2$ is an artifact of our method of proof or reflects on special behavior in planar geometry.

We believe that the basic proof idea of Theorem \ref{deg_k} works for any $k=1,\ldots,d-1$. Note that going through the steps of the proof one sees that the cases $k=2,\ldots,d-1$ get computationally much more involved and intricate and we have not been able to prove these cases. However, the following conjecture stands to reason:

\begin{conjecture}\label{conjecture}
Let  $\xi_n$ be a set of $n$ independent and uniformly chosen random points from a convex body $K\subset \R^d$. Then for $k=1, \dots d-1$ 
	\[
		\E \deg_k (\xi_n) = \Theta(n^{d-k})
	\]
	as $n \to \infty$.
\end{conjecture}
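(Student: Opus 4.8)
The lower bound is immediate and requires no new ideas. Counting incidences between empty simplices and the $k$-element subsets of their vertex sets yields the identity
\begin{equation*}
\sum_{B\in\binom{\xi_n}{k}}\deg_k(B;\xi_n)=\binom{d+1}{k}\,N_\tr(\xi_n),
\end{equation*}
so that the average $k$-degree equals $\binom{n}{k}^{-1}\binom{d+1}{k}N_\tr(\xi_n)$. Since the maximum dominates the average, taking expectations and invoking Theorem~\ref{th:emptytr} (which gives $\E N_\tr(\xi_n)=\Theta(n^d)$ for every $d\geq2$) produces
\begin{equation*}
\E\deg_k(\xi_n)\ \geq\ \binom{n}{k}^{-1}\binom{d+1}{k}\,\E N_\tr(\xi_n)\ =\ \Theta(n^{d-k}),
\end{equation*}
with a constant depending only on $d$. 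Thus the entire difficulty, exactly as in Theorem~\ref{deg_k}, lies in the matching \emph{upper} bound, which must improve on the trivial estimate $\deg_k(\xi_n)\leq\binom{n}{d-k+1}\leq n^{d-k+1}$ by a full factor of $n$.

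For the upper bound the plan is to run the scheme of Theorem~\ref{deg_k} with the apex replaced by a $(k-1)$-dimensional base. First I would pass from the maximum to a single base by a union bound: by exchangeability,
\begin{equation*}
\P\bigl(\deg_k(\xi_n)>t\bigr)\ \leq\ \binom{n}{k}\,\P\bigl(\deg_k(B_0;\xi_n)>t\bigr),\qquad B_0=\{X_1,\dots,X_k\},
\end{equation*}
whence $\E\deg_k(\xi_n)\leq\int_0^\infty\min\bigl(1,\binom{n}{k}\,\P(\deg_k(B_0;\xi_n)>t)\bigr)\,\dint t$. It therefore suffices to produce a tail (equivalently, a moment) estimate for the degree of a single base that decays fast enough to absorb the prefactor $\binom{n}{k}=\Theta(n^{k})$ once $t$ exceeds $C(d,K)\,n^{d-k}$. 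The heart of such an estimate is the computation of $\E[\deg_k(B_0;\xi_n)^m]$: conditioning on the base, one writes the degree as a sum of emptiness indicators over the $\binom{n-k}{d-k+1}$ completions and expands. The emptiness probability of a fixed completion is $(1-V/\l_d(K))^{n-d-1}\approx e^{-nV/\l_d(K)}$, where $V$ is the volume of the spanned $d$-simplex; an affine Blaschke--Petkantschin decomposition anchored at the flat $\aff(B_0)$ then shows that the empty completions concentrate on configurations of volume $O(1/n)$, pinning the first moment at order $n^{d-k}$ rather than the naive $n^{d-k+1}$, the saved factor of $n$ coming precisely from the volume being forced down to $O(1/n)$.

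The main obstacle is exactly the higher-moment (multi-simplex) analysis that the authors flag as ``computationally much more involved'' for $k\geq2$. For $k=1$, which is settled in Theorem~\ref{deg_k}, a single apex admits a direct radial analysis and the correlation between two empty simplices sharing the apex is governed by a single tractable integral. For a base of dimension $k-1\geq1$, however, each empty simplex is determined by $d-k+1$ free points lying off the flat $\aff(B_0)$, and controlling the $m$-th moment requires bounding the probability that $m$ simplices sharing the common base are \emph{simultaneously} empty; this couples the free points through overlapping emptiness regions, and the resulting computation — an inclusion--exclusion over shared faces combined with a Blaschke--Petkantschin change of variables on $m(d-k+1)$ points relative to a fixed flat — grows rapidly in complexity with $k$, which is why I expect only the cases $k=1$ (and $k=d$, via Theorem~\ref{th:deg_d}) to be fully tractable by elementary means. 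A second, subtler obstacle is \emph{uniformity in the position of the base}: bases whose flat nearly touches $\partial K$ can carry anomalously many thin empty simplices, and controlling this boundary regime is the likely origin of the logarithmic defect already visible in the planar case $k=1$, $d=2$ of Theorem~\ref{deg_k}. I expect the clean statement $\Theta(n^{d-k})$ of the conjecture to hold whenever this boundary contribution stays of lower order, which is precisely why the conjecture is phrased without logarithmic corrections.
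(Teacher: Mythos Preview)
This statement is a \emph{conjecture} in the paper, not a theorem: the authors say explicitly that for $k=2,\dots,d-1$ ``we have not been able to prove these cases.'' There is therefore no paper proof to compare against, and your proposal is, as you yourself acknowledge, not a complete argument either.

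Your lower bound is correct and coincides with what the paper sketches in the introduction: double counting incidences gives that the typical $k$-degree is of order $n^{d-k}$, and the maximum dominates the average.

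For the upper bound your strategy genuinely differs from the paper's. In the only settled case $k=1$ the paper does \emph{not} use a union bound over bases; it applies Aven's Lemma~\ref{Aven} to split off the maximal conditional expectation $\E(\deg_k(B_0;\xi_n)\mid B_0)$, and then the Rhee--Talagrand inequality (Lemma~\ref{Rhee}) to control the centred $p$-th moment by the $p$-th moment of a single add-one-point increment. That centering is what your scheme lacks, and it matters quantitatively: a plain union bound $\P(\deg_k(\xi_n)>t)\leq\binom{n}{k}\P(\deg_k(B_0;\xi_n)>t)$ together with an $m$-th moment bound $\E[\deg_k(B_0;\xi_n)^m]=O(n^{m(d-k)})$ yields, after integrating the tail, only $\E\deg_k(\xi_n)=O\bigl(n^{\,d-k+k/m}\bigr)$, missing the target by $n^{k/m}$ for every fixed $m$. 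Closing this would require either exponential concentration of $\deg_k(B_0;\xi_n)$ about its mean or moment bounds with controlled constants for $m$ of order $\log n$ --- considerably more than the fixed-moment computation you outline. The Aven/Rhee--Talagrand route sidesteps exactly this loss by working with deviations from the conditional mean, reducing everything to a second-moment estimate for the add-one-point increment; that is precisely the ``multi-simplex'' integral where the paper's $k=1$ proof concentrates its effort and where, for general $k$, both you and the authors locate the real obstruction.
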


\section{Proof of Theorem \ref{th:deg_d}}

Assume $X \subset \R^d$ is a set of $n$ points in general position. For a $(d-1)$-simplex $[x_1,\ldots,x_{d}]$ we denote by $\cF_1([x_{1},\ldots,x_{d}])$ the set of edges of $[x_{1},\ldots,x_{d}]$. We write $| [x_{1},\ldots,x_{d}] | \leq a$ if and only if $\| e \| \leq a$ for every $e \in \cF_{1}([x_{1},\ldots,x_{d}])$. Next we define a functional that measures "closeness" of points as was done in \cite{BMR13} and \cite{Te18}, respectively. However, in \cite{Te18} this was done by checking whether there exists a point in $\{x_{1},\ldots,x_{d}\}$ such that all remaining points are below a certain distance to it. Here, we will measure this closeness by checking if all the edges of the simplex $[x_{1},\ldots,x_{d}]$ are below a certain length,
\begin{equation}\label{eq:defN}
	N_{\g n}(X) = \sum_{ \{x_{1},\ldots,x_{d} \}  \in {X \choose d} } \1\left(| [x_{1},\ldots,x_{d}] | \leq (\g n)^{-\frac {1}{d-1}}\right),
\end{equation}
where $\g$ is a constant to be chosen later. The choice of $n^{-\frac{1}{d-1}}$ as the bound for the edge length will ensure that the expectation of $N_{\g n}$ converges to a constant as $n \to \infty$. In the following a suitable choice for $\g$ will be made such that the $N_{\g n}$ equals one with sufficiently high probability. The essence of the proof is to build the star of empty simplices above precisely this base-simplex and to show that the number of spikes of this star is of order $n$.

Note that the choice of the functional in \cite{Te18} and the one we made here coincide for the $2$-dimensional case in \cite{BMR13}. 
Similarly as in \cite{BMR13} and \cite{Te18} we also introduce a second functional which additionally weights the summands by the respective degree of that $d$-tuple of points, i.e.,
\[
F_{\g n}(X) = \sum_{ \{ x_{1},\ldots,x_{d} \}  \in {X \choose d}} \1\left(| [x_{1},\ldots,x_{d}] | \leq (\g n)^{-\frac {1}{d-1}}\right) \deg_d(x_{1},\ldots,x_{d};X),
\]
and make use of the equation
\[
	F_{\g n}(X) \leq N_{\g n}(X) \deg_d(X).
\]
In particular, we will need the slightly weaker  version
\begin{equation}\label{eq:lower_bound_deg_d}
	 \deg_d(X) \geq  
	 F_{\g n}(X) \1(N_{\g n}(X)=1)    .
\end{equation}

Now we have gathered all the necessary preliminary statements to prove Theorem \ref{th:deg_d}. We start with two lemmas that will be needed later on. 
First we deal with a set $\xi_n$ of $n$ independent and uniformly chosen random points from a compact set $C \subset R^d$ with $\l_d(C)>0$ and with boundary of Lebesgue measure zero.

\begin{lemma}\label{constN}
	Let $\xi_n$  be a set of  $n$ independent and uniformly chosen random points from $C$. Then, there exists a positive constant $c(d) \leq (d!)^{-1} \k_d^{d-1}$ (with equality in the case $d=2$) such that
	\[
	 \E N_{\g n}(\xi_n)  \leq 	\lim_{n \to \infty} \E N_{\g n}(\xi_n) = c(d) \g^{-d} \l_d(C)^{-(d-1)}.
	\]
\end{lemma}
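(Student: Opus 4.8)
The plan is to compute $\E N_{\g n}(\xi_n)$ exactly for finite $n$ and then let $n\to\infty$. By linearity of expectation and since $X_1,\dots,X_n$ are i.i.d.\ uniform in $C$, summing \eqref{eq:defN} over the $\binom nd$ unordered $d$-subsets gives
\[
\E N_{\g n}(\xi_n) = \binom nd\, \P\bigl(|[X_1,\dots,X_d]|\le a_n\bigr),\qquad a_n:=(\g n)^{-\frac1{d-1}},
\]
so everything reduces to the probability that a single $d$-tuple has all $\binom d2$ edges of length at most $a_n$. First I would write this probability as an integral over $C^d$ against the uniform density $\l_d(C)^{-d}$ and substitute $x_i = x_1 + a_n y_i$ for $i=2,\dots,d$ (keeping $x_1$), whose Jacobian is $a_n^{d(d-1)}=(\g n)^{-d}$. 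With the convention $y_1:=0$, each edge condition $\|x_i-x_j\|\le a_n$ becomes $\|y_i-y_j\|\le 1$, hence
\[
\P\bigl(|[X_1,\dots,X_d]|\le a_n\bigr)=\frac{(\g n)^{-d}}{\l_d(C)^d}\int_C I_n(x_1)\,\dint x_1,
\]
where $I_n(x_1)$ is the integral over $(y_2,\dots,y_d)\in(\R^d)^{d-1}$ of $\prod_{1\le i<j\le d}\1(\|y_i-y_j\|\le 1)$, restricted to those $y$ satisfying $x_1+a_ny_i\in C$ for all $i$.

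Dropping the membership constraints defines the purely geometric constant
\[
J_d:=\int_{(\R^d)^{d-1}}\ \prod_{1\le i<j\le d}\1(\|y_i-y_j\|\le 1)\,\dint y_2\cdots\dint y_d,\qquad y_1=0,
\]
i.e.\ the volume of all $(d-1)$-tuples whose points together with the origin are pairwise within distance $1$; since this set lies in $(B^d)^{d-1}$ we have $0\le I_n(x_1)\le J_d<\infty$. Two elementary limits then finish the computation. On the one hand $\binom nd\, n^{-d}\to (d!)^{-1}$. On the other hand, for every $x_1\in C^o$ there is a ball $B(x_1,r)\subset C$, and once $a_n<r$ every point $x_1+a_ny_i$ with $\|y_i\|\le 1$ lies in that ball, so $I_n(x_1)=J_d$ for all large $n$; thus $I_n(x_1)\to J_d$ pointwise on $C^o$. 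Since $\l_d(\partial C)=0$ this holds a.e., and the uniform bound $I_n\le J_d$ permits dominated convergence, giving $\int_C I_n\,\dint x_1\to J_d\,\l_d(C)$. Combining the three limits,
\[
\lim_{n\to\infty}\E N_{\g n}(\xi_n)=\frac{J_d}{d!}\,\g^{-d}\,\l_d(C)^{-(d-1)},
\]
so one reads off $c(d)=J_d/d!$.

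The asserted inequality $\E N_{\g n}(\xi_n)\le\lim_{n\to\infty}\E N_{\g n}(\xi_n)$ requires no monotonicity argument: writing $\E N_{\g n}(\xi_n)=\g^{-d}\l_d(C)^{-d}\,\binom nd n^{-d}\int_C I_n\,\dint x_1$, both factors satisfy $\binom nd n^{-d}\le(d!)^{-1}$ and $I_n\le J_d$ for \emph{every} $n$, so the product is bounded by its limit termwise. Finally, the bound on $c(d)$ follows because every $y_i$ in the definition of $J_d$ obeys $\|y_i\|=\|y_i-y_1\|\le 1$, i.e.\ $y_i\in B^d$; discarding the constraints $\|y_i-y_j\|\le 1$ between the free points yields $J_d\le\k_d^{\,d-1}$ and hence $c(d)\le(d!)^{-1}\k_d^{\,d-1}$. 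For $d=2$ there is a single free point $y_2\in B^2$ and no further constraint, so $J_2=\k_2$ and equality holds; for $d\ge 3$ the inter-point constraints are active on a set of positive measure, so the inequality is strict.

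The only genuinely delicate point is the boundary layer: the region of $x_1$ near $\partial C$ where the membership constraints force $I_n(x_1)<J_d$. This is exactly where the hypothesis $\l_d(\partial C)=0$ is used, guaranteeing a.e.\ pointwise convergence $I_n\to J_d$ and hence the applicability of dominated convergence; I expect this to be the step needing the most care, while the change of variables and the two scalar limits are routine bookkeeping.
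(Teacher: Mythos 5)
Your proof is correct and follows essentially the same route as the paper: the reduction to $\binom nd$ times a single probability, the scaling substitution centered at one of the points, dominated convergence using interior points and $\l_d(\partial C)=0$, and the bound $J_d\le\k_d^{d-1}$ via $(B^d)^{d-1}$ with equality for $d=2$. Your explicit termwise argument ($\binom nd n^{-d}\le (d!)^{-1}$ and $I_n\le J_d$) for why the finite-$n$ expectation is bounded by its limit is exactly what the paper's brief remark ``which, clearly, also is an upper bound'' asserts, just spelled out.
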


\begin{proof}
	By the definition \eqref{eq:defN} we have
	\begin{align}
	\E N_{\g n}(\xi_n) 
	&= \label{eq:defintENP}
    \binom{n}{d}  \P\left(| [X_{1},\ldots,X_{d}] | \leq (\g n)^{-\frac {1}{d-1}} \right) 
	\\ &= \label{eq:defintEN}
	\frac{\binom{n}{d}}{ \l_d(C)^{d}} \int\limits_{C^{d}} \1\left(| [x_{1},\ldots,x_{d}] | \leq (\g n)^{-\frac {1}{d-1}} \right)\, \prod_{i=1}^{d} \dint x_i 
	\\ &= \nonumber
	\frac{\binom{n}{d}}{ \l_d(C)^{d}} \int\limits_{C^{d}} \1\left(\|e\| \leq (\g n)^{-\frac {1}{d-1}}\ , \forall e \in \cF_{1}([x_1,\ldots,x_d])\right)\, \prod_{i=1}^{d} x_i .
	\end{align}
	Now we transform $x_i = (\g n)^{-\frac {1}{d-1}}y_i + x_d$, $i=1, \dots, d-1$, and use the notation $C^\prime = (\g n)^{\frac {1}{d-1}} (C-x_d)$.
	\begin{align}\label{eq:ENnbound}
	\E N_{\g n}(\xi_n) &= \nonumber
	\frac{\binom{n}{d}}{ \l_d(C)^{d}}   (\g n)^{-d} 
	\\ & 
	\times \int\limits_{C} \int\limits_{( C^\prime )^{d-1}} \prod\limits_{1\leq i < j < d} \1(\|y_i-y_j\| \leq 1) \prod\limits_{1\leq i < d} \1(\|y_i\| \leq 1)\,	\prod_{i=1}^{d-1}\dint y_i\, \dint x_d 
	\end{align}
Because $C' \to \R^d$ if $x_d$ is in the interior of $C$, and the boundary of $C$ has measure zero, this shows that 
	\begin{align*}
	\lim\limits_{n \to \infty}\E N_{\g n}(\xi_n)
	&=
	\ (d!)^{-1} \g^{-d} \l_d(C)^{-(d-1)} \int\limits_{(B^d)^{d-1}} \prod\limits_{1\leq i < j < d} \1(\|y_i-y_j\| \leq 1)\, \prod_{i=1}^{d-1}\dint y_i 
	\\ & \leq 
	\ (d!)^{-1} \g^{-d} \l_d(C)^{-(d-1)} \k_d^{d-1}
	\end{align*}
which, clearly, also  is an upper bound for \eqref{eq:ENnbound}. Note that in the planar case this inequality is in fact an equality. 
\end{proof}

\begin{lemma}\label{ProbN}
	Let $\xi_n$  be a set of  $n$ independent and uniformly chosen random points from $C$. Then $N_{\g n}(\xi_n)$ converges to a Poisson random variable with mean $\E N_{\g n}(\xi_n)$, in particular
    \[
		\P( N_{\g n}(\xi_n)=0) = e^{- \E N_{\g n}(\xi_n)} (1+o(1)) .
	\]
\end{lemma}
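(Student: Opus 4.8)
The plan is to establish Poisson convergence through the method of factorial moments. Denoting by $\mu := \lim_{n\to\infty}\E N_{\g n}(\xi_n) = c(d)\g^{-d}\l_d(C)^{-(d-1)}$ the finite limit provided by Lemma~\ref{constN}, it suffices to show that for every fixed $m \ge 1$ the factorial moment $\E\bigl[N_{\g n}(\xi_n)\,(N_{\g n}(\xi_n)-1)\cdots(N_{\g n}(\xi_n)-m+1)\bigr]$ converges to $\mu^m$, which is the $m$-th factorial moment of a Poisson law with mean $\mu$. Writing $N_{\g n}(\xi_n) = \sum_{\alpha}\1_\alpha$ with $\1_\alpha = \1(|[\alpha]| \le (\g n)^{-1/(d-1)})$ ranging over the $d$-element subsets $\alpha$ of $\xi_n$, this factorial moment expands as the sum of $\E[\1_{\alpha_1}\cdots\1_{\alpha_m}]$ over ordered $m$-tuples of pairwise distinct subsets, which I would organise according to the intersection pattern of the $\alpha_i$.

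First I would extract the main term, coming from the pairwise disjoint tuples. For such a tuple the $m$ indicators are functions of disjoint collections of the independent points, so their joint expectation factorizes into $p_n^m$, where $p_n = \P(|[X_1,\dots,X_d]| \le (\g n)^{-1/(d-1)})$. The number of ordered pairwise disjoint tuples equals $\prod_{j=0}^{m-1}\binom{n-jd}{d}$; since $\binom{n-jd}{d}/\binom{n}{d} \to 1$ for each fixed $j$ and $\binom{n}{d}\,p_n = \E N_{\g n}(\xi_n) \to \mu$ by Lemma~\ref{constN}, the disjoint contribution converges to $\mu^m$.

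The heart of the argument, and the step I expect to be the main obstacle, is to show that the overlapping tuples contribute $o(1)$. The decisive geometric observation is a chaining estimate: form the intersection graph on $\{1,\dots,m\}$ in which $i$ and $j$ are joined whenever $\alpha_i \cap \alpha_j \ne \emptyset$; if a connected component of this graph spans $t_l$ distinct points, then on the event that all simplices in the component are small, these $t_l$ points lie within distance $O(n^{-1/(d-1)})$ of any fixed reference vertex of the component, because small simplices sharing a vertex have uniformly bounded diameter and chain together along the component. Fixing one reference point per component and integrating the remaining points over balls of volume $O(n^{-d/(d-1)})$, a tuple whose union consists of $t$ points distributed among $p$ components satisfies $\E[\1_{\alpha_1}\cdots\1_{\alpha_m}] = O(n^{-d(t-p)/(d-1)})$, while the number of such tuples is $O(n^t)$; the implied constants depend only on $m$ and $d$. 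Consequently each intersection pattern contributes $O(n^{(dp-t)/(d-1)})$. Since any two distinct $d$-subsets lying in a common component share at most $d-1$ points, every non-singleton component spans $t_l \ge d+1$ points, so $t - dp = \sum_l (t_l - d) \ge 1$ as soon as the tuple is not pairwise disjoint. As there are only finitely many intersection patterns for fixed $m$, the total overlapping contribution is $O(n^{-1/(d-1)}) = o(1)$.

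Combining both parts shows that every factorial moment of $N_{\g n}(\xi_n)$ converges to $\mu^m$, whence $N_{\g n}(\xi_n)$ converges in distribution to a Poisson random variable of mean $\mu$. In particular $\P(N_{\g n}(\xi_n)=0) \to e^{-\mu}$; and because $\E N_{\g n}(\xi_n) \to \mu$ by Lemma~\ref{constN}, we have $e^{-\E N_{\g n}(\xi_n)} \to e^{-\mu} \ne 0$, so the two quantities agree up to a factor $1+o(1)$, which gives the claimed identity $\P(N_{\g n}(\xi_n)=0) = e^{-\E N_{\g n}(\xi_n)}(1+o(1))$.
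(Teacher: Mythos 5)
Your proof is correct, but it takes a genuinely different route from the paper. The paper invokes the Stein--Chen method in the form of Barbour and Eagleson's Poisson convergence theorem for dissociated statistics: it verifies that the indicators $X_J$ are dissociated, that $p_J = \E X_J = O(n^{-d})$, and that $\E X_J X_K = O\bigl(n^{-d(2d-r-1)/(d-1)}\bigr)$ when $|J \cap K| = r$, and then reads off from that theorem a total variation bound of order $O(n^{-1/(d-1)})$ between $N_{\g n}(\xi_n)$ and a Poisson variable whose mean is the exact finite-$n$ expectation $\E N_{\g n}(\xi_n)$. You instead use the method of factorial moments, extracting the main term $\mu^m$ from the pairwise disjoint tuples and controlling the overlapping tuples by a chaining argument over the components of the intersection graph. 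It is worth noting that the geometric core is the same in both proofs: your estimate $O\bigl(n^{-d(t-p)/(d-1)}\bigr)$ for a pattern with $t$ points in $p$ components specializes, for $m=2$, $p=1$, $t=2d-r$, exactly to the paper's bound on $\E X_J X_K$; the difference lies in the probabilistic machinery wrapped around it. The trade-off is this: the Stein--Chen route needs only pairwise (second-order) overlap information and delivers a quantitative rate together with approximation by the Poisson law of the $n$-dependent mean, which is literally what the lemma asserts; your route is self-contained (no external Poisson approximation theorem), but it must control overlaps of every order $m$ -- hence the chaining over components, which the paper never needs -- and it yields only distributional convergence to $\mathrm{Poi}(\mu)$ with $\mu = \lim_n \E N_{\g n}(\xi_n)$, without a rate. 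Since $e^{-\E N_{\g n}(\xi_n)} \to e^{-\mu} \neq 0$, both versions give the displayed asymptotics $\P(N_{\g n}(\xi_n)=0) = e^{-\E N_{\g n}(\xi_n)}(1+o(1))$, which is all that is used in the proof of Theorem \ref{th:deg_d}.
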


\begin{proof}
We use the Poisson convergence theorem by Barbour and Eagleson \cite{BarbourEagleson} for dissociated random variables. To use the notations from their paper, we have $N_{\g n} (\xi_n) = \sum_{J \in {\xi_n \choose d}} X_J$ with 
$$ X_J = 
\1\left(\| e\| \leq (\g n)^{-\frac {1}{d-1}},\, \forall e \in \cF_1([J]) \right) 
= \prod\limits_{e \in \cF_1([J])}\1\left(\| e\| \leq (\g n)^{-\frac {1}{d-1}}\right) 
 $$  
These random variables are dissociated since $X_J$ and $X_K$ are independent if $J$ and $K$ are disjoint. 
Note that we have proved above that $p_J = \E X_J = O(n^{-d})$. Assume that $J, K \in {\xi_n \choose d}$ have $r$ points in common, i.e. $J=\{X_1, \dots X_d\},\, K=\{X_{d+1-r}, \dots, X_{2d-r} \}$. Then as in the proof of Lemma \ref{constN} above and using the same substitution we have 
	\begin{align*}
	\E X_J X_K
	&= 
	\frac{1}{ \l_d(C)^{2d-r}} \int\limits_{C^{2d-r}} 
	\prod\limits_{e \in \cF_1([J]) \cup \cF_1([K])}\1\left(\| e\| \leq (\g n)^{-\frac {1}{d-1}}\right)  \prod_{i=1}^{2d-r} x_i .
	\\ & =
	\frac{1}{ \l_d(C)^{2d-r}}   (\g n)^{- \frac{d(2d-r-1)}{d-1}} 
	\times \int\limits_{C} \int\limits_{( C^\prime )^{2d-r-1}} \prod\limits_{1\leq i < j < d} \1(\|y_i-y_j\| \leq 1) 
	\\ & \hskip4cm \times 
	\prod\limits_{d+1-r\leq i < j < 2d-r} \1(\|y_i-y_j\| \leq 1) 
	\prod \1(\|y_i\| \leq 1)   \prod \dint y_i\, \dint x_d 
	\end{align*}
which proves that $\E X_J X_K$ is of order $O(n^{- \frac{d(2d-r-1)}{d-1}})$.

Now, Theorem 2 of \cite{BarbourEagleson} says that for $A \subset \N_0$ and a Poisson distributed random variable $Z$ with mean $\E N_{\g n} (\xi_n)$,
\begin{align*}
|\P(N_{\g n}(\xi_n) \in A) - \P(Z \in A)| 
&\leq 
\frac 1{\max(1, N_{\g n}(\xi_n))} \sum_{J \in {\xi_n \choose d}} 
\! \left\{ p_J^2 + \sum _{r=1}^{d-1} \sum_{\substack{K \in {\xi_n \choose d}\\ |J \cap K|=r }} \!(p_J p_K + \E X_J X_K ) \! \right\}   
\\ &\leq 
c(d,K) n^d 
\left\{ n^{-2d} + \sum _{r=1}^{d-1} n^{d-r} (n^{-2d} + n^{- \frac{d(2d-r-1)}{d-1}} ) \right\}   
\\ &
= O(n^{- \frac 1{d-1}}).
\end{align*}
In the planar case a similar result was already known by work of Brown and Silverman \cite{BS1, BS2}.
\end{proof}

We return now to the proof of Theorem \ref{th:deg_d}. So let $\xi_n$ be a set of $n$ independent and uniformly chosen random points from a convex body $K\subset \R^d$. We plug $\xi_n$ into \eqref{eq:lower_bound_deg_d} and take expectations on both sides. This leads to
\begin{align*}
	&\E \deg_d(\xi_n)  \geq \E \left[ F_{\g n}(\xi_n) \1(N_{\g n}(\xi_n) = 1) \right] \\
	& \geq \frac{\binom{n}{d}}{\l_d(K)^d}  \int\limits_{K^d} \1\left( | [x_1,\ldots,x_d] |\leq (\g  n)^{- \frac {1}{d-1}} \right) \E \left[\deg_d(x_1,\ldots,x_d;\xi^\prime_n) \1(N_{\g n}(\xi^\prime_n)=1)\right]\, \prod_{i=1}^d\dint x_i,
\end{align*}
where $\xi^\prime_n=\xi_{n-d} \cup \{x_1,\ldots,x_d\}$. 
Next we need a lower bound for the expectation. We have
\begin{align}\label{eq:inproofdeg}
\E &\left[\deg_d\, (x_1,\ldots,x_d;\xi^\prime_n)  \1(N_{\g n}(\xi^\prime_n)=1)\right] 
\nonumber \\ & \nonumber 
\qquad= \E \sum_{X_{d+1} \in \xi_{n-d}} \1( [x_1,\ldots,x_d,X_{d+1}]^o \cap \xi_n^\prime = \emptyset ) \1(N_{\g n}(\xi_n^\prime)=1) 
\\ & 
\qquad= (n-d) \max\limits_{x_1, \dots, x_{d+1} \in K} \P([x_1,\ldots,x_d,x_{d+1}]^o \cap \xi_n^{\prime\prime} = \emptyset,\ N_{\g n}(\xi_n^{\prime\prime})=1 ) 
\end{align}
where $\xi_n^{\prime\prime}=\xi_{n-d-1} \cup \{x_1,\ldots,x_{d+1}\}$. 
Now set 
$$A(x_1,\ldots,x_{d+1})=\left([x_1,\ldots,x_{d+1}]  \cup \bigcup_{i=1}^{d+1} B\left(x_i, (\g  n)^{-\frac 1{d-1}}\right)\right) \cap K . $$ 
The base $[x_1, \ldots, x_d]$ of the simplex $[x_1,\ldots, x_{d+1}]$ has edge length bounded by $(\g n)^{-\frac d{d-1}} $ and thus is contained in 
$B\left(x_1, (\g n)^{-\frac 1{d-1}}\right)$. Its height is bounded by $D(K)$, the diameter of $K$. This implies
$$ \l_{d}(A(x_1,\ldots,x_{d+1})) \leq 
\frac 1d \kappa_d (\g n)^{-\frac {d-1}{d-1}} D(K) +  
(d+1)  \kappa_d (\g n)^{-\frac d{d-1}} 
=
\frac 1d \kappa_d (\g n)^{-1} D(K) (1+o(1))
$$
as $n \to \infty$. Hence, the probability that no point of $\xi_{n-d-1}$ is contained in $A(x_1, \ldots, x_{d+1})$ can be computed via
\begin{align*}
\P(  \xi_{n-d-1} \cap A (x_1,\ldots,x_{d+1})^o = \emptyset) 
&= 
 \left(1-  \frac{\l_d(A (x_1,\ldots,x_{d+1}))}{\l_d(K)} \right)^{n-d-1} 
\\ & \geq
e^{-  \frac 1d \kappa_d \g^{-1} D(K) \l_d(K)^{-1} } (1+o(1)) 
\end{align*}
as $n \to \infty$.
Note that 
$ N_{\g n}(\xi_{n-d-1})=0  $ and $\xi_{n-d-1} \cap A (x_1,\ldots,x_{d+1})^o = \emptyset$ 
imply 
$ N_{\g n}(\xi_n^{\prime\prime})=1  $ and $[x_1,\ldots,x_d,x_{d+1}]^o \cap \xi_n^{\prime\prime} = \emptyset$.
Hence
\begin{align*}
\P( & [x_1,\ldots,x_d,x_{d+1}]^o \cap \xi_n^{\prime\prime} = \emptyset,\ N_{\g n}(\xi_n^{\prime\prime})=1 ) 
\\ & \geq 
\P(N_{\g n}(\xi_{n-d-1})=0 \,\vert\, \xi_{n-d-1} \cap A (x_1,\ldots,x_{d+1})^o = \emptyset) 
e^{-  \frac 1d \kappa_d \g^{-1} D(K) \l_d(K)^{-1} } (1+o(1)) 
\end{align*}
and since in this conditional probability the random points are chosen uniformly in the compact set $K \backslash  A (x_1,\ldots,x_{d+1})^o$ of volume smaller than $\l_d(K)$, we can use Lemma \ref{ProbN} to obtain
\begin{align*}
\P( & [x_1,\ldots,x_d,x_{d+1}]^o \cap \xi_n^{\prime\prime} = \emptyset,\ N_{\g n}(\xi_n^{\prime\prime})=1 ) 
\\ & \geq 
\P(N_{\g n}(\xi_{n-d-1})=0 \,\vert\, \xi_{n-d-1} \cap A (x_1,\ldots,x_{d+1})^o = \emptyset) 
e^{-  \frac 1d \kappa_d \g^{-1} D(K) \l_d(K)^{-1} } (1+o(1)) 
\\ & \geq 
e^{-  c(d) \g^{-d} \l_d(K)^{-(d-1)} } 
e^{-  \frac 1d \kappa_d \g^{-1} D(K) \l_d(K)^{-1} } (1+o(1)) 
.
\end{align*}
We plug this into \eqref{eq:inproofdeg} and get 
$$ 
\E [\deg_d\, (x_1,\ldots,x_d;\xi^\prime_n)  \1(N_{\g' n}(\xi^\prime_n)=1)] 
\geq
n e^{-  c(d) \g^{-d} \l_d(K)^{-(d-1)} } 
e^{-  \frac 1d \kappa_d \g^{-1} D(K) \l_d(K)^{-1} } (1+o(1))  
 $$
as $n \to \infty$. 
Since  this is independent of $x_1, \dots , x_d$, we can conclude using \eqref{eq:defintEN} that 
\begin{align*}
\E \deg_d(\xi_n) 
\geq &\  
n e^{-  c(d) \g^{-d} \l_d(K)^{-(d-1)} }
e^{-  \frac 1d \kappa_d \g^{-1} D(K) \l_d(K)^{-1} } (1+o(1)) 
\\ &\ \times 
\frac{\binom{n}{d}}{\l_d(K)^d} \int\limits_{K^d} \1\left( | [x_1,\ldots,x_d]  |\leq (\g^\prime n)^{- \frac 1{d-1}} \right) \,  \dint x_1 \ldots \dint x_d 
\\  & = 
n e^{-  c(d) \g^{-d} \l_d(K)^{-(d-1)} }
e^{-  \frac 1d \kappa_d \g^{-1} D(K) \l_d(K)^{-1} }
\E N_{\g n} (\xi_n)  \  (1+o(1))  
\\  & = 
n e^{-  c(d) \g^{-d} \l_d(K)^{-(d-1)} }
e^{-  \frac 1d \kappa_d \g^{-1} D(K) \l_d(K)^{-1} }
c(d) \g^{-d} \l_d(K)^{-(d-1)} \  (1+o(1))  
\end{align*}
as $n \to \infty$, where we used the limit provided in Lemma \ref{constN}. There exists some $\g' $ which maximizes the right hand side of the inequality, and the maximum clearly is positive. This yields Theorem \ref{th:deg_d}. 

In the planar case, the maximum can be computed explicitly. For the unit circle it is attained at $\g = 1$, which yields
\begin{equation}\label{eq:constd=2}
\E \deg_2(\xi_n) 
\geq \frac 12 e^{- \frac 32} \,  n  \  (1+o(1)).  
\end{equation}

\qed

\section{Proof of Theorem \ref{th:emptytr} and Theorem \ref{deg_k}}
\subsection{Preliminaries}
For the proofs of Theorem \ref{th:emptytr} and Theorem \ref{deg_k} several lemmas will be needed. The first one is a quite general bound on the maximum of a collection of random variables and is a straightforward modification of a result of Aven \cite[Lemma 2.2]{Aven85}. Note that neither independence nor identical distributions are required for this lemma.

\begin{lemma}\label{Aven}
	Let $I$ be a finite index set and $S_i, C_i$, $i \in I$, be random variables. Then, for any $p\geq 1$
	\[
		\E \max_{i \in I} S_i \leq \E \max_{i \in I} C_i + \left( \sum_{i \in I} \E | S_i - C_i |^p \right)^\frac{1}{p}.
	\]
\end{lemma}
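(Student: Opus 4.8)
The plan is to prove this as a direct generalisation of Aven's lemma, where the only modification needed is to allow arbitrary (not necessarily i.i.d.) families of random variables. The statement says nothing about the dependence structure, so the argument must be purely pointwise/algebraic at its core, with the $L^p$ norm appearing only through a crude union bound at the end.

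First I would reduce the problem to controlling the single difference $\max_i S_i - \max_i C_i$. The key pointwise observation is that for any real numbers,
\[
	\max_{i \in I} S_i - \max_{i \in I} C_i \leq \max_{i \in I}\,(S_i - C_i) \leq \max_{i \in I} |S_i - C_i|.
\]
The first inequality holds because if $j$ is an index attaining $\max_i S_i$, then $\max_i S_i - \max_i C_i \leq S_j - C_j \leq \max_i (S_i - C_i)$, using $\max_i C_i \geq C_j$. Taking expectations gives
\[
	\E \max_{i \in I} S_i \leq \E \max_{i \in I} C_i + \E \max_{i \in I} |S_i - C_i|,
\]
so everything now rests on bounding $\E \max_{i \in I} |S_i - C_i|$ by $\bigl( \sum_{i \in I} \E |S_i - C_i|^p \bigr)^{1/p}$.

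The second step is this last bound, which is where the exponent $p \geq 1$ enters. Since the index set $I$ is finite, I would dominate the maximum by the $\ell^p$ aggregate pointwise: for any fixed realisation,
\[
	\max_{i \in I} |S_i - C_i| = \Bigl( \max_{i \in I} |S_i - C_i|^p \Bigr)^{1/p} \leq \Bigl( \sum_{i \in I} |S_i - C_i|^p \Bigr)^{1/p}.
\]
Taking expectations and then applying Jensen's inequality to the concave map $t \mapsto t^{1/p}$ (valid since $p \geq 1$) yields
\[
	\E \Bigl( \sum_{i \in I} |S_i - C_i|^p \Bigr)^{1/p} \leq \Bigl( \E \sum_{i \in I} |S_i - C_i|^p \Bigr)^{1/p} = \Bigl( \sum_{i \in I} \E |S_i - C_i|^p \Bigr)^{1/p},
\]
where the final equality is just linearity of expectation over the finite sum. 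Combining the two displays gives the claim.

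I do not anticipate a genuine obstacle here: the lemma is essentially a packaging of two elementary inequalities (the pointwise bound on the difference of maxima, and the $\ell^\infty \leq \ell^p$ domination) together with Jensen. The only point requiring a little care is ensuring the finiteness of $I$ is used correctly so that the $\ell^p$ bound and the interchange of expectation with the finite sum are both legitimate, and checking that no integrability beyond $\E|S_i - C_i|^p < \infty$ is implicitly assumed. The fact that Aven's original statement is recovered, and that independence and identical distribution play no role, confirms that the dependence structure of the $S_i, C_i$ is irrelevant to the argument.
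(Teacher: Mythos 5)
Your proof is correct. The paper itself offers no proof of this lemma --- it cites Aven's Lemma 2.2 and calls the statement a ``straightforward modification'' of it --- and your argument (the pointwise bound $\max_i S_i - \max_i C_i \leq \max_i |S_i - C_i|$, the domination $\max_i |S_i - C_i| \leq \bigl(\sum_{i\in I} |S_i - C_i|^p\bigr)^{1/p}$ valid for finite $I$, and Jensen's inequality for the concave map $t \mapsto t^{1/p}$) is exactly the standard argument behind the cited result, so you have correctly supplied the details the paper leaves to the reference.
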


The next lemma is a slight generalization due to Reitzner \cite{survMR} of a result of Rhee and Talagrand \cite{RheeTal} which will prove to be a very practical tool when it comes to simplifying and bounding the second summand in Aven's Lemma.
\begin{lemma}\label{Rhee}
	Let $S(X_1,\ldots,X_m)$ be a real symmetric function of i.i.d. random vectors $X_i$, $1\leq i \leq m+1$. Then, for $p \geq 1$, 
\begin{equation*}
 \E \left|S-\E S \right|^p 
\leq m^{\frac p2} c_p \E   |S-S'|^p 
\end{equation*}
for any real symmetric function $S'(X_1,\ldots,X_{m+1})$, with $c_p= 2^p (18 \sqrt q \ p)^p$ where $ \frac 1p + \frac 1q=1$.
\end{lemma}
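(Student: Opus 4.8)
The plan is to prove this via a Doob martingale decomposition combined with Burkholder's martingale moment inequality, exploiting the auxiliary $(m+1)$-st vector $X_{m+1}$ together with the symmetry of both $S$ and $S'$ to control the martingale increments. First I would fix the filtration $\cF_k = \sigma(X_1,\ldots,X_k)$ for $0 \le k \le m$ and write $S - \E S = \sum_{k=1}^{m} d_k$ with increments $d_k = \E[S \mid \cF_k] - \E[S \mid \cF_{k-1}]$. The reason the extra variable enters is that it lets us realise each increment as a genuine resampling: integrating out $X_k$ in $\E[S \mid \cF_{k-1}]$ against an independent copy, which we take to be $X_{m+1}$, gives $\E[S \mid \cF_{k-1}] = \E[S^{(k)} \mid \cF_k]$, where $S^{(k)}$ denotes $S$ with its $k$-th argument replaced by $X_{m+1}$. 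Hence $d_k = \E[S - S^{(k)} \mid \cF_k]$, and conditional Jensen yields $\E|d_k|^p \le \E|S - S^{(k)}|^p$.

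Next I would apply Burkholder's inequality in the form $\E|S - \E S|^p \le (18\sqrt q\, p)^p\, \E ( \sum_{k=1}^m d_k^2 )^{p/2}$, and then, for $p \ge 2$, the power-mean bound $( \sum_k d_k^2 )^{p/2} \le m^{p/2-1} \sum_k |d_k|^p$. It then remains to pass from the resampled $S^{(k)}$ to the prescribed symmetric function $S'$. Writing $S - S^{(k)} = (S - S') - (S^{(k)} - S')$ and using the symmetry of $S'$ to rewrite $S'(X_1,\ldots,X_{m+1})$ with the arguments in positions $k$ and $m+1$ interchanged, the term $S^{(k)} - S'$ becomes exactly $(S - S')$ evaluated on the tuple with $X_k$ and $X_{m+1}$ swapped. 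Since the $X_i$ are i.i.d., this swap preserves the joint distribution, so $\E|S^{(k)} - S'|^p = \E|S - S'|^p$, and therefore $\E|S - S^{(k)}|^p \le 2^p\, \E|S - S'|^p$ for every $k$. Summing over the $m$ increments collapses the factor $m^{p/2-1}\cdot m = m^{p/2}$ and assembles the constant $c_p = 2^p(18\sqrt q\, p)^p$.

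The main obstacle I anticipate is twofold. The conceptual heart is the swapping identity: one must verify carefully that the artificial copy $X_{m+1}$, introduced only to realise the Doob increment as a resampling, can be matched against the \emph{arbitrary} symmetric $S'$ in $m+1$ variables so that it never survives except through the controllable quantity $S - S'$. This is precisely where the symmetry of both $S$ and $S'$, together with the exchangeability of the $X_i$, becomes indispensable, and it is the step I would write out most carefully. The technical nuisance is the range $1 \le p < 2$, where the power-mean step reverses and the clean factor $m^{p/2}$ no longer drops out of $\sum_k d_k^2$ directly; there I would argue through monotonicity of the $\ell^p$-norms, noting that the stated bound is sharpest (and is in any case primarily applied) for $p \ge 2$. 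Tracking the explicit Burkholder constant $18\sqrt q\, p$ is then routine bookkeeping, which I would import from the cited martingale inequality rather than re-derive.
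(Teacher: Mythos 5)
Your argument is, in essence, the proof behind the cited result: the paper itself does not prove this lemma but imports it from Reitzner \cite{survMR} and Rhee--Talagrand \cite{RheeTal}, and those proofs run exactly along your lines --- Doob martingale differences $d_k = \E[S\mid\cF_k]-\E[S\mid\cF_{k-1}]$ with $\cF_k=\sigma(X_1,\ldots,X_k)$, the resampling identity $d_k=\E[S-S^{(k)}\mid\cF_k]$ furnished by the spare variable $X_{m+1}$, Burkholder's square-function inequality with constant $18\sqrt{q}\,p$, the power-mean bound, and the transposition of $X_k$ with $X_{m+1}$ (which, as you correctly observe, needs only the symmetry of $S'$ together with exchangeability of the $X_i$) to get $\E|S-S^{(k)}|^p\le 2^p\,\E|S-S'|^p$. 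For $p\ge 2$ every step is correct and the constants assemble to exactly $c_p=2^p(18\sqrt q\,p)^p$ with the factor $m^{p/2}$.

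The one place where you are too optimistic is the range $1\le p<2$, and this is not a technicality you could have closed: the inequality as stated is simply false there. Monotonicity of $\ell^p$-norms gives $(\sum_k d_k^2)^{p/2}\le\sum_k|d_k|^p$, hence a factor $m$ rather than $m^{p/2}$, and $m>m^{p/2}$ in this range, so your route only yields the exponent $\max(p/2,1)$; moreover, no other argument can do better. Indeed, take the $X_i$ real and symmetric $\alpha$-stable with $1<p<\alpha<2$, and set $S=\sum_{i=1}^m X_i$, $S'=\sum_{i=1}^{m+1}X_i$. Then $S-\E S=S$ has the law of $m^{1/\alpha}X_1$ and $S-S'=-X_{m+1}$, so $\E|S-\E S|^p=m^{p/\alpha}\,\E|X_1|^p$ while the right-hand side equals $c_p\, m^{p/2}\,\E|X_1|^p$; since $p/\alpha>p/2$, the inequality fails for $m$ large. (For $p=1$ the statement is vacuous anyway, because $q=\infty$ makes $c_p$ infinite.) So the lemma holds only for $p\ge 2$ --- which is all the paper ever uses, since it applies it with $p=2$ --- and your instinct that the case $p<2$ is problematic was right; it is the statement's claimed range of $p$, not your proof, that needs the restriction.
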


And lastly we will need two versions of the affine Blaschke-Petkantschin formula. Here, we denote by $\cH_d$ the affine Grassmannian of $(d-1)$-dimensional affine hyperplanes of $\R^d$ equipped with the unique rigid motion invariant Haar measure $\mu_{d-1}$, normalized by $\mu_{d-1}(\{ H \in \cH_d : H \cap {B}^d \neq \emptyset \}) = 2$, where ${B}^d$ denotes the Euclidean unit ball in $\R^d$ and $\kappa_{d}=\l_{d}({B}^{d})$. First, we state the classical Blaschke-Petkantschin formula, see e.g. the book by Schneider and Weil \cite[Theorem 7.2.7.]{SW}. In the following $\dint H$ denotes integration with respect to $\mu_{d-1}$, and $\dint x_i$ integration with respect to the Lebesgue measure on the affine space given by the range of integration.
\begin{lemma}\label{lem:BPF}
Let $g: (\R^d)^{d} \to \R$ be a nonnegative measurable function. Then 
$$ \int\limits_{(\R^d)^{d}} g(x_1,\ldots,x_{d}) \prod_{i=1}^{d} \dint x_i 
= 
b_d \int\limits_{\cH_d} \int\limits_{H^d} g(x_1,\ldots,x_{d}) 
\l_{d-1}([x_1,\ldots,x_d]) \prod_{i=1}^{d} \dint x_i \ \dint H 
$$
with the constant 
$b_d = \frac{d \kappa_d}{2 } (d-1)!$. 
\end{lemma}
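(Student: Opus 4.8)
The plan is to prove the formula as a change of variables that decomposes a generic $d$-tuple $(x_1,\ldots,x_d)$ into the hyperplane $H=\aff(x_1,\ldots,x_d)$ it spans, together with the positions of the points inside $H$. Since affinely dependent $d$-tuples form a $\l_{d^2}$-null set in $(\R^d)^d$, I may restrict to tuples spanning a hyperplane. I would parametrize an oriented hyperplane by its unit normal $u\in S^{d-1}$ and signed distance $t\in\R$ via $H(u,t)=\{x:\langle x,u\rangle=t\}$, and write each point as $x_i=t\,u+y_i$ with $y_i\in u^\perp\cong\R^{d-1}$. This yields a map $\Phi\colon(u,t,y_1,\ldots,y_d)\mapsto(x_1,\ldots,x_d)$ which is $2$-to-$1$ onto its image (since $(u,t)$ and $(-u,-t)$ describe the same hyperplane), with source and target both of dimension $d^2$. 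The key identity to establish is
\[
\prod_{i=1}^d \dint x_i = (d-1)!\,\l_{d-1}([x_1,\ldots,x_d])\,\dint u\,\dint t\,\prod_{i=1}^d \dint y_i ,
\]
after which the normalization of $\mu_{d-1}$ together with the factor $2$ from the two-fold covering produce the constant $b_d$.

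The heart of the argument is the Jacobian of $\Phi$. I would compute it at a fixed configuration, using rotation invariance to assume $u=e_d$ and a smooth orthonormal frame $b_1(u),\ldots,b_{d-1}(u)$ of $u^\perp$ with $b_j(e_d)=e_j$, writing $y_i=\sum_j\zeta_i^j b_j(u)$. Differentiating $x_i=t\,u+\sum_j\zeta_i^j b_j(u)$ gives $\partial x_i/\partial\zeta_i^j=b_j$, $\partial x_i/\partial t=u$, and $\partial x_i/\partial u_k=t\,e_k+\sum_j\zeta_i^j\,\partial b_j/\partial u_k$, where differentiating $b_j\cdot u=0$ forces $\partial b_j/\partial u_k\cdot e_d=-\delta_{jk}$. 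Arranging the Jacobian in $d$ blocks, one per point, the $\zeta$-columns form an identity block in the $e_1,\ldots,e_{d-1}$ rows; using them to clear the tangential ($e_l$, $l<d$) parts of the $u$-columns --- a set of column operations that leaves the determinant unchanged and also absorbs any frame-dependent tangential terms --- reduces the computation to the $d\times d$ minor in the $e_d$-rows whose $i$-th row is $(1,-\zeta_i^1,\ldots,-\zeta_i^{d-1})$. Row-reducing this minor identifies its absolute determinant with $|\det(\zeta_2-\zeta_1,\ldots,\zeta_d-\zeta_1)|=(d-1)!\,\l_{d-1}([x_1,\ldots,x_d])$, since $y_i\mapsto\zeta_i$ is an isometry $H\to\R^{d-1}$. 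This gives the displayed measure identity.

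Finally I would pin down the constant. With the normalization $\mu_{d-1}(\{H:H\cap B^d\neq\emptyset\})=2$, the unique parametrization $(u,t)\in S^{d-1}\times(0,\infty)$ gives $\dint H=\tfrac{2}{d\kappa_d}\,\dint u\,\dint t$, equivalently $\int_{S^{d-1}}\int_\R(\cdot)\,\dint t\,\dint u=d\kappa_d\int_{\cH_d}(\cdot)\,\dint H$ over the two-fold cover. Combining this with the factor $\tfrac12$ from the $2$-to-$1$ map and the Jacobian factor $(d-1)!$ produces $b_d=\frac{d\kappa_d}{2}(d-1)!$, as claimed; alternatively, the statement is the $q=d-1$ case of the general affine Blaschke--Petkantschin formula, whose volume power $d-q$ equals $1$ here. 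I expect the main obstacle to be the Jacobian computation --- in particular verifying cleanly that the frame-dependent tangential terms $\partial b_j/\partial u_k$ drop out of the determinant --- together with the bookkeeping of the two normalization factors so that the constant comes out exactly rather than up to an undetermined multiple.
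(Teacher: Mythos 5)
Your proof is correct, but it is worth pointing out that the paper does not prove Lemma \ref{lem:BPF} at all: the lemma is quoted as the classical affine Blaschke--Petkantschin formula with a reference to \cite[Theorem 7.2.7]{SW}, where it is deduced (for flats of every dimension $q$) from the linear Blaschke--Petkantschin formula. Your argument is therefore a genuinely different, self-contained route, tailored to the codimension-one case $q=d-1$, where the two-fold cover $(u,t)\in S^{d-1}\times\R \mapsto H=\{x:\langle x,u\rangle = t\}$ is explicit. The two delicate steps both check out. First, the Jacobian: each $\zeta_i^j$-column has its only nonzero entries in the tangential rows of the point $x_i$ (it vanishes in the normal rows and in all rows belonging to other points), so using these columns to clear the tangential parts of the $u$-columns alters nothing else; the frame-dependent terms therefore genuinely drop out, and the surviving $d\times d$ minor with rows $(1,-\zeta_i^1,\ldots,-\zeta_i^{d-1})$ has absolute determinant $|\det(\zeta_2-\zeta_1,\ldots,\zeta_d-\zeta_1)|=(d-1)!\,\l_{d-1}([x_1,\ldots,x_d])$. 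Second, the normalization: the convention $\mu_{d-1}(\{H:H\cap B^d\neq\emptyset\})=2$ forces $\dint u\,\dint t = d\kappa_d\,\dint H$ on the double cover, and combined with the factor $\frac 12$ from the $2$-to-$1$ map this yields exactly $b_d=\frac{d\kappa_d}{2}(d-1)!$; reassuringly, this normalization is precisely Schneider--Weil's $\kappa_{d-q}=\kappa_1=2$, so your constant agrees with the cited one (and with the paper's later consistency check that the planar limit of $n^{-2}\E N_\tr(\xi_n)$ equals $2$). What each approach buys: the citation is shorter and gives the formula for all $q$ at once; your computation is elementary, avoids the linear Blaschke--Petkantschin machinery, and verifies the constant directly under the paper's specific normalization of the Haar measure, which is exactly the place where an error could most easily slip in.
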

Second, we need the following version of the affine Blaschke-Pentkantschin formula, which was provided by Hug and Reitzner \cite{HuR05}.
\begin{lemma}\label{lem:HugReitzner}
Let $0 \leq k \leq d-1$ and let $g: (\R^d)^{2d-k} \to \R$ be a nonnegative measurable function. Then 
\begin{align*}
	&\int\limits_{(\R^d)^{2d-k}} g(x_1,\ldots,x_{2d-k}) \prod_{i=1}^{2d-k} \dint x_i 
	= b_{d}^2
	\int\limits_{(\cH_d)^2} \int\limits_{H_1^{d-k}} \int\limits_{(H_1 \cap H_2)^k} \int\limits_{H_2^{d-k}} g(x_1,\ldots,x_{2d-k}) \\
	& \times \l_{d-1}([x_1,\ldots,x_d]) \l_{d-1}([x_{d-k+1},\ldots,x_{2d-k}]) (\sin \varphi)^{-k} \prod_{i=d+1}^{2d-k} \dint x_i \prod_{i=d-k+1}^{d} \dint x_i  \prod_{i=1}^{d-k} \dint x_i \prod_{i=1}^2 \dint H_i,
\end{align*}
where  $\varphi$ denotes the angle between the normal vectors of $H_1$ and $H_2$, and the constant $b_{d}$ is chosen as in Lemma \ref{lem:BPF}.
\end{lemma}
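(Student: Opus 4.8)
The plan is to derive the formula by applying the classical affine Blaschke--Petkantschin formula (Lemma \ref{lem:BPF}) twice, once for each of the two hyperplanes, and to track carefully the Jacobian that appears because the two groups of points overlap. I would label the $2d-k$ points according to the three roles they play: the $d-k$ points $x_1,\ldots,x_{d-k}$ eventually lie in $H_1$ only, the $d-k$ points $x_{d+1},\ldots,x_{2d-k}$ lie in $H_2$ only, and the $k$ points $x_{d-k+1},\ldots,x_d$ are shared, so that $\aff(x_1,\ldots,x_d)=H_1$, $\aff(x_{d-k+1},\ldots,x_{2d-k})=H_2$, and the shared points are forced into $H_1\cap H_2$. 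Since $H_1\cap H_2$ is generically a $(d-2)$-flat while the $k$ shared points span a $(k-1)$-flat, the constraint $k-1\le d-2$, i.e. $k\le d-1$, is exactly the stated hypothesis, which is a reassuring consistency check on the grouping.

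First I would apply Lemma \ref{lem:BPF} to the first $d$ coordinates $x_1,\ldots,x_d$. This replaces the integration of these points over $(\R^d)^{d}$ by an integration over $H_1\in\cH_d$ together with $x_1,\ldots,x_d\in H_1$, introduces the constant $b_d$ and the weight $\l_{d-1}([x_1,\ldots,x_d])$, and in particular pins the shared points $x_{d-k+1},\ldots,x_d$ to the hyperplane $H_1$; the remaining $d-k$ points $x_{d+1},\ldots,x_{2d-k}$ are untouched and still range over $\R^d$. The crux is the second step, in which $H_2$ is extracted from the second group $x_{d-k+1},\ldots,x_{2d-k}$. This is no longer the setting of Lemma \ref{lem:BPF}, since $k$ of these points already lie in the fixed hyperplane $H_1$ while the other $d-k$ range freely over $\R^d$. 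What is needed is a \emph{relative} Blaschke--Petkantschin identity that integrates the $d-k$ free points jointly with the $k$ shared points constrained to $H_1$, decomposes according to $H_2=\aff(x_{d-k+1},\ldots,x_{2d-k})$, and thereby records that the shared points land in $H_1\cap H_2$. This relative identity again contributes a factor $b_d$ and the weight $\l_{d-1}([x_{d-k+1},\ldots,x_{2d-k}])$, and it is precisely this mixed decomposition that generates the factor $(\sin\varphi)^{-k}$.

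I would prove the relative identity by a direct Jacobian computation. Fixing $H_1,H_2$ and the flat $L=H_1\cap H_2$, decompose $\R^d$ orthogonally into the $(d-2)$-dimensional direction space of $L$ and the $2$-plane spanned by the unit normals of $H_1$ and $H_2$, whose angle is $\varphi$. The Jacobian of the joint change of variables factorizes, and the contribution of each shared point—integrated over the $(d-1)$-dimensional hyperplane $H_1$ on the left but over the $(d-2)$-dimensional flat $L$ on the right—costs one factor $(\sin\varphi)^{-1}$, reflecting the area $\sin\varphi$ of the parallelogram spanned by the two in-plane edge directions; since exactly the $k$ shared points are constrained in this way, one obtains $(\sin\varphi)^{-k}$ in total. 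Getting this power and its sign right is the main obstacle. Assembling the two applications then yields the constant $b_d^2$, the two weights as stated, and the surviving factor $(\sin\varphi)^{-k}$. As a sanity check, for $k=0$ the two groups are disjoint, the relative step degenerates to a second independent application of Lemma \ref{lem:BPF}, the shared-point integration is empty, and $(\sin\varphi)^{0}=1$, so the identity reduces to two independent classical Blaschke--Petkantschin formulas, confirming both the constant $b_d^2$ and the structure of the weights.
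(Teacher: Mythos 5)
There is no internal proof to compare yours against: the paper quotes this lemma verbatim from Hug and Reitzner \cite{HuR05} and uses it as a black box, so your attempt must be judged on its own merits. Your architecture is the natural one and is sound. Applying Lemma \ref{lem:BPF} to $x_1,\ldots,x_d$ produces the factor $b_d$, the weight $\l_{d-1}([x_1,\ldots,x_d])$ and the hyperplane $H_1$, with the shared points $x_{d-k+1},\ldots,x_d$ now pinned to $H_1$; what remains is exactly your ``relative'' identity: for a fixed hyperplane $H_1$ and nonnegative measurable $F$,
\begin{align*}
\int\limits_{H_1^{k}}\int\limits_{(\R^d)^{d-k}} F \,\prod_{j=1}^{d-k} \dint z_j \prod_{i=1}^{k} \dint y_i
= b_d \int\limits_{\cH_d} (\sin\varphi)^{-k} &\int\limits_{(H_1\cap H_2)^{k}}\int\limits_{H_2^{d-k}} F\; \l_{d-1}([y_1,\ldots,y_k,z_1,\ldots,z_{d-k}]) \\
&\times \prod_{j=1}^{d-k} \dint z_j \prod_{i=1}^{k} \dint y_i \,\dint H_2 .
\end{align*}
Your dimension count is consistent ($k(d-1)+(d-k)d=d+k(d-2)+(d-k)(d-1)=d^2-k$), the $k=0$ degeneration is the right sanity check, and your geometric explanation of one factor $(\sin\varphi)^{-1}$ per shared point is the correct reason for the angular term.

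The genuine weakness is that this relative identity carries \emph{all} of the content of the lemma, and you only assert it, describing but not performing the Jacobian computation. Note also that the Jacobian does not factor point by point: it factors as (simplex volume)\,$\times\,(\sin\varphi)^{-k}$ --- already for $d=2$, $k=1$ it equals $\l_{1}([y,z])(\sin\varphi)^{-1}$, coupling all points through the simplex weight --- so the per-point bookkeeping you sketch has to be organized around that weight. A clean way to close the gap, which avoids any fresh Jacobian computation and automatically yields the constant $b_d$, is to derive the relative identity from Lemma \ref{lem:BPF} itself: write the left-hand side as $\lim_{\varepsilon\to 0}\varepsilon^{-k}$ times the integral in which each $y_i$ ranges over the slab $\{x\in\R^d : \mathrm{dist}(x,H_1)\leq \varepsilon/2\}$, apply Lemma \ref{lem:BPF} to the $d$ points $y_1,\ldots,y_k,z_1,\ldots,z_{d-k}$, and let $\varepsilon\to0$. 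For $H_2$ making angle $\varphi$ with $H_1$, the slab meets $H_2$ in a strip of width $\varepsilon/\sin\varphi$ around $H_1\cap H_2$, so each pinned point contributes $(\sin\varphi)^{-1}$ and its integration collapses onto $H_1\cap H_2$; a routine approximation argument (first continuous compactly supported $F$, then general nonnegative measurable $F$), together with the fact that hyperplanes parallel to $H_1$ form a null set, finishes the proof. With that step supplied, your argument is a complete, self-contained proof of a statement for which the paper offers only a citation.
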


\subsection{Proof of Theorem \ref{th:emptytr}: the number of empty simplices}
Let  $\xi_n$ be a set of $n$ independent and uniformly chosen random points from a convex body $K\subset \R^d$. Recall that by $N_\tr (\xi_n)$ we mean the total number of empty simplices, i.e. 
$$
N_\tr (\xi_n)= \sum_{\{ X_1, \dots, X_{d+1}\} \in { \xi_n \choose d+1}} \1 ([X_{1},\ldots,X_{d+1}]^o \cap \xi_{n}  = \emptyset) 
. $$
Given $x_1, \dots, x_{d+1}$, the probability that a uniform point avoids the convex hull 
$[x_1, \dots, x_{d+1}]$ is given by 
$$1-\frac{\l_d[x_1, \dots, x_{d+1}]}{\l_d(K)}  .$$
Hence, taking expectations and using that the points are identically distributed gives
\begin{eqnarray*}
\E N_\tr (\xi_n)
 &=&
{n \choose d+1} \l_d(K)^{-(d+1)} \int \limits_{K^{d+1}} \left(1-\frac{\l_d[x_1, \dots, x_{d+1}]}{\l_d(K)}\right)^{n-d} \, \dint x_1 \dots \dint x_{d+1}
\\ &=&
{n \choose d+1} \l_d(K)^{-(d+1)} \int \limits_{K^{d+1}} \left(1-  \frac{\l_{d-1}[x_1, \dots, x_{d}]}{d \l_d(K)}\, |h_{d+1}| \right)^{n-d} \, \dint x_1 \dots \dint x_{d+1}
\end{eqnarray*}
where $h_{d+1}$ is the (signed) distance of $x_{d+1}$ to the affine hull of $x_1, \dots, x_d$. 
Denote by $H=H(0)$ the affine hull of $x_1, \dots, x_d$, and by $H(h_{d+1})$ the parallel hyperplane through $x_{d+1} = (y_{d+1}, h_{d+1})$ for $h_{d+1} \in K\vert_{H^\perp}$ and $y_{d+1} \in K \cap H(h_{d+1})$. 
By Fubini's theorem we have
\begin{eqnarray*}
\E N_\tr (\xi_n)
&=& 
{n \choose d+1}  \l_d(K)^{-(d+1)} 
\\ &&
\int \limits_{K^{d}} \int\limits_{K\vert_{H^\perp}}\left(1-  \frac{\l_{d-1}[x_1, \dots, x_{d}]}{d \l_d(K)} \, |h_{d+1}| \right)^{n-d} \l_{d-1}(K \cap H(h_{d+1}))\, \dint h_{d+1}\dint x_1 \dots \dint x_d 
\end{eqnarray*}
We substitute $h=n h_{d+1}$ and obtain
\begin{eqnarray*}
\E N_\tr (\xi_n)
 &=&
\frac 1n {n \choose d+1}  \l_d(K)^{-(d+1)} 
\\&&
\times \int \limits_{K^{d}} \int\limits_{n K\vert_{H^\perp}}\left(1-  \frac{\l_{d-1}[x_1, \dots, x_{d}]}{d \l_d(K) n } \, |h| \right)^{n-d} \l_{d-1} \left(K \cap H\left(\frac hn \right)\right)\, \dint h\dint x_1 \dots \dint x_d .
\end{eqnarray*}
Lebesgue's dominated convergence theorem shows that 
\begin{align*}
\lim_{n \to \infty} n^{-d} \E N_\tr (\xi_n)
& =
\frac 1{(d+1)!}  \l_d(K)^{-(d+1)}\int \limits_{K^{d}} \int\limits_{\R}e^{-  \frac{\l_{d-1}[x_1, \dots, x_{d}]}{d \l_d(K) } \, |h| } \l_{d-1} \left(K \cap H\left(0\right)\right)\, \dint h\dint x_1 \dots \dint x_d 
\\ & =
\frac 2{(d+1)!}  \l_d(K)^{-(d+1)}\int \limits_{K^{d}} \frac{d \l_d(K) }{\l_{d-1}[x_1, \dots, x_{d}]}  \l_{d-1} \left(K \cap H\right)\, \dint x_1 \dots \dint x_d .
\end{align*}
Using the Blaschke-Petkantschin formula from Lemma \eqref{lem:BPF} we obtain
\begin{eqnarray*}
\lim_{n \to \infty} n^{-d} \E N_\tr (\xi_n)
 &=&
\frac {2d  }{(d+1)!} b_d  \l_d(K)^{-d} \int\limits_{\cH_d} \int \limits_{(K\cap H)^{d}}   \l_{d-1} \left(K \cap H\right)\, \dint x_1 \dots \dint x_d \dint H
\\ &=&
\frac {d \k_d }{(d+1)}  \l_d(K)^{-d} \int\limits_{\cH_d} \l_{d-1} \left(K \cap H\right)^{d+1}\, \dint H .
\end{eqnarray*}

The inequality 
$$
\int_{\cH_d} \l_{d-1} (K \cap H) ^{d+1} \dint H  
	\leq \frac {\k_{d-1}^{d+1}  \k_{d^2}}{\k_d^d  \k_{(d-1)(d+1)}} \l_d(K)^{d} 
$$
is classical, see e.g. \cite[(8.56)]{SW}, with equality if $d=2$, and for $d \geq 3$ iff $K$ is an ellipsoid. This implies for $d \geq 3$
\begin{eqnarray*}
\lim_{n \to \infty} n^{-d} \E N_\tr (\xi_n)
&\leq &
\frac {d  }{(d+1)}  \frac {\k_{d-1}^{d+1}  \k_{d^2}}{\k_d^{d-1}  \k_{(d-1)(d+1)}} ,
\end{eqnarray*}
with equality  if $K$ is an ellispoid, and for $d=2$ 
$$
\lim_{n \to \infty} n^{-2} \E N_\tr (\xi_n)
= 2 .
$$
To get a lower bound, observe that each $(d-1)$-dimensional simplex formed by points of $\xi_n$ can either be complemented to at least two empty simplices by the two points which are closest to its affine hull on each side of the corresponding hyperplane, if the simplex is not a facet of the the convex hull of $\xi_n$, or to at least one empty simplex if it is a facet of the convex hull of $\xi_n$. Since the number of facets is much smaller than $n^{d}$, we have 
$$ \E N_\tr(\xi_n) \geq 2 {n \choose d} (1+o(1)) = \frac 2{d!}\, n^{d} (1+o(1)) .
$$

\qed

\subsection{Proof of Theorem \ref{deg_k}: the $k$-degree}

Again we will be working with a random point set $\xi_n$ consisting of $n$ uniformly and independently chosen points from a convex body $K \subset \R^n$. As pointed out in the introduction the considerations for $\E N_\tr (\xi_n)$ show that 
\begin{equation}\label{eq:upplowbound}
	c(d,K) n^{d-k} \leq \E \deg_k(\xi_n) \leq c(d) n^{d-k+1}.
\end{equation}

\bigskip
We go on showing that for the case $k=1$ the lower bound is indeed the correct one. We will start by approaching the problem for general $k$ and specialize to $k=1$ in the second part of the proof. We invoke Lemma \ref{Aven}, and use that $\deg_k(X_{1},\ldots,X_{k};\xi_n) $ is identically distributed for all sets $\{ X_1, \dots, X_k \} \in { \xi_n \choose k}$. 
\begin{align}\label{eq:Aven-general}
	\E \deg_k(\xi_n) 
	&= 
	\E \max_{ \{ X_1,\ldots,X_k \} \in {\xi_n \choose k} } \deg_k\left(X_{1},\ldots,X_{k};\xi_n\right) 
	\\ & \leq \nonumber 
	\E \max_{  \{ X_1,\ldots,X_k \} \in {\xi_n \choose k} }  \E \left(\deg_k(X_{1},\ldots,X_{k};\xi_n)|X_{1}, \dots, X_{k}\right) 
	\\ & +\nonumber 
	\left( {n \choose k} \E \Big| \deg_k\left(X_{1},\ldots,X_{k};\xi_n\right) - \E \left(\deg_k(X_{1},\ldots,X_{k};\xi_n)|X_1, \dots, X_k\right) \Big|^{p}  \right)^\frac{1}{p}
\end{align}

Next, we transform the second summand into something that can be handled properly. Let $X_{n+1}$ be an additional uniform random point from $K$ independent of $\xi_n$, and write $\xi_{n+1} = \xi_n \cup \{ X_{n+1} \}$. We apply Lemma \ref{Rhee} to $\xi_n \setminus \{X_{1},\ldots,X_k\}$, for $\{X_1,\ldots,X_k\}$ fixed, and obtain
\begin{align*}
	\E &\Big| \deg_k(X_{1},\ldots,X_{k};\xi_n) - \E  (\deg_k(X_{1},\ldots,X_{k};\xi_n)|X_1, \dots, X_k) \Big|^{p} 
	\\ &\leq 
	c_{p,k} (n-k)^{\frac p2}  \E \Big| \deg_k(X_{1},\ldots,X_{k};\xi_n) - \deg_k(X_{1},\ldots,X_{k};\xi_{n+1}) \Big|^{p} 
	\\ & \leq 
	c_{p,k} n^{\frac p2} \E\, \vast| \sum_{ \{ X_{{k+1}},\ldots,X_{{d+1}} \} \in {\xi_n \setminus \{X_{1},\ldots,X_k\} \choose d-k+1}} \1([X_{1},\ldots,X_{{d+1}}]^o \cap \xi_n = \emptyset)
	\\ &
	- \sum_{ \{ X_{{k+1}},\ldots,X_{{d+1}} \} \in {\xi_{n+1} \setminus \{X_{1},\ldots,X_k\} \choose d-k+1}} \1([X_{1},\ldots,X_{i_{d+1}}]^o \cap \xi_{n+1}  = \emptyset) \vast|^{p} ,
\end{align*}
with $c_{p,k}$ being a generic positive constant only depending on $p$ and $k$. 
These two sums can now be decomposed. The first sum can be decomposed in those simplices which contain the additional point $X_{n+1}$ in their interior and those which do not, i.e.,
\begin{align*}
	\sum_{ \{ X_{{k+1}},\ldots,X_{{d+1}} \} \in {\xi_{n} \setminus \{X_{1},\ldots,X_k\} \choose d-k+1}}
	&
	\1([X_{1},\ldots,X_{{d+1}}]^o \cap \xi_n = \emptyset)
	\\ & = 
	\sum_{ \{ X_{{k+1}},\ldots,X_{{d+1}} \} \in {\xi_{n} \setminus \{X_{1},\ldots,X_k\} \choose d-k+1}} \1([X_{1},\ldots,X_{{d+1}}]^o \cap \xi_{n+1}  = \emptyset) 
	\\ & + 
	\sum_{ \{ X_{{k+1}},\ldots,X_{{d+1}} \} \in {\xi_{n} \setminus \{X_{1},\ldots,X_k\} \choose d-k+1}} \1([X_{1},\ldots,X_{{d+1}}]^o \cap \xi_{n+1}  = \{X_{n+1}\}).
\end{align*}
Analogously, we can decompose the second sum by distinguishing those simplices that contain the additional point $X_{n+1}$ as a vertex and those which do not, i.e.,
\begin{align*}
	\sum_{ \{ X_{{k+1}},\ldots,X_{{d+1}} \} \in {\xi_{n+1} \setminus \{X_{1},\ldots,X_k\} \choose d-k+1}} 
	&
	\1([X_{1},\ldots,X_{{d+1}}]^o \cap \xi_{n+1}  = \emptyset)
	\\ & = 
	\sum_{ \{ X_{{k+1}},\ldots,X_{{d}} \} \in {\xi_{n} \setminus \{X_{1},\ldots,X_k\} \choose d-k}} 
	\1([X_{1},\ldots,X_{{d}}, X_{n+1}]^o \cap \xi_{n+1}  = \emptyset)
	\\ & + 
	\sum_{ \{ X_{{k+1}},\ldots,X_{{d+1}} \} \in {\xi_{n} \setminus \{X_{1},\ldots,X_k\} \choose d-k+1}} 
	\1([X_{1},\ldots,X_{{d+1}}]^o \cap \xi_{n+1}  = \emptyset).
\end{align*}
Plugging these decompositions back into the formula yields
\begin{align*}
	\E &\Big| \deg_k(X_{1},\ldots,X_{k};\xi_n) - \E \deg_k(X_{1},\ldots,X_{k};\xi_n|X_1, \dots, X_k) \Big|^{p} 
	\\ &\leq 
	c_{p,k} n^{\frac p2} \E \vast| 
	\sum_{ \{ X_{{k+1}},\ldots,X_{{d+1}} \} \in {\xi_{n} \setminus \{X_{1},\ldots,X_k\} \choose d-k+1}} \1([X_{1},\ldots,X_{{d+1}}]^o \cap \xi_{n+1}  = \{X_{n+1}\})
	\\ & \hskip2cm  - 
	\sum_{ \{ X_{{k+1}},\ldots,X_{{d}} \} \in {\xi_{n} \setminus \{X_{1},\ldots,X_k\} \choose d-k}} 
	\1([X_{1},\ldots,X_{{d}}, X_{n+1}]^o \cap \xi_{n+1}  = \emptyset)
	\vast|^{p} .
\end{align*}
Note, that each simplex that fulfills $[X_{1},\ldots,X_{{d+1}}]^o \cap \xi_{n+1}  = \{X_{n+1}\}$ gives rise to $d-k+1$ empty simplices, i.e., meaning that $[\{X_{1},\ldots, X_{{d+1}} \} \setminus \{X_{j} \},X_{n+1}]^o \cap \xi_{n+1}=\emptyset$ holds, for every $j=k+1,\ldots,d+1$. Hence, we have
\begin{align*}
	&\sum_{ \{ X_{{k+1}},\ldots,X_{{d}} \} \in {\xi_{n} \setminus \{X_{1},\ldots,X_k\} \choose d-k}} \1([X_{1},\ldots,X_{{d}}, X_{n+1}]^o \cap \xi_{n+1}  = \emptyset)
	\\& \hskip2cm \geq 
	(d-k+1)
	\sum_{ \{ X_{{k+1}},\ldots,X_{{d+1}} \} \in {\xi_{n} \setminus \{X_{1},\ldots,X_k\} \choose d-k+1}} 
	\1([X_{1},\ldots,X_{{d+1}}]^o \cap \xi_{n+1}  = \{X_{n+1}\})
	\end{align*}
which means that the second sum dominates the first one. From this we arrive at
\begin{align*}
	\E 
	& 
	\left| \deg_k(X_{1},\ldots,X_{k};\xi_n) - \E \deg_k(X_{1},\ldots,X_{k};\xi_n|X_1, \dots, X_k) \right|^{p} 
	\\ &\leq 
	c_{p,k} n^{\frac p2} \E \left( 
	\sum_{ \{ X_{{k+1}},\ldots,X_{{d}} \} \in {\xi_{n} \setminus \{X_{1},\ldots,X_k\} \choose d-k}} 
	\1([X_{1},\ldots,X_{{d}}, X_{n+1}]^o \cap \xi_{n+1}  = \emptyset)
	\right)^{p} .
\end{align*}
We relabel the random points $X_k$ in the sum, put this into \eqref{eq:Aven-general} and obtain 
\begin{align}\label{eq:Bound}
	\E 
	& 
	\deg_k(\xi_n) 
	\leq 
	\E \max_{ \{ X_1,\ldots,X_k \} \in { \xi_n \choose k} }  \E \left(\deg_k(X_{1},\ldots,X_{k};\xi_n)|X_{1}, \dots, X_{k}\right) 
	\\ & +\nonumber 
	c_{p,k} n^{\frac kp + \frac 12} \left( \E \left( 
	\sum_{ \{ X_{{1}},\ldots,X_{{d-k}} \} \in {\xi_{n-k} \choose d-k}} 
	\1([X_{1},\ldots,X_{{d-k}}, X_{n-k+1} , \dots,  X_{n+1}]^o \cap \xi_{n+1}  = \emptyset)
	\right)^{p}  \right)^\frac{1}{p}
\end{align}
We will go on by bounding the maximal expectation, and the number of additional simplices with $X_{n+1}$, in the case $k=1$. The appropriate choice of $p$ will turn out to be $2$. 
We are convinced that in principle this approach could be successful for all $k$, albeit with a different $p$ than $2$, but we have not been able to obtain precise bounds in the cases $k=2, \dots, d-1$.

\subsubsection{The maximal expectation}

Given $X_1, \dots, X_{d+1}$, the probability that the simplex  $[X_1, \dots, X_{d+1}]$ is empty, is given by
$$ \left(1-\frac{\l_d([X_1, \dots, X_{d+1}])}{\l_d(K)} \right)^{n-d-1}. $$
We condition on  $X_1$ and apply the classical Blaschke-Petkantschin formula from Lemma \ref{lem:BPF}. 
\begin{align*}
& 
\P  ([X_{1},\ldots,X_{{d+1}}]^o \cap \xi_n = \emptyset | X_1)
\\ &=
\l_d(K)^{-d}\int\limits_{K^d}  \left(1-\frac{\l_d([X_1, x_2, \dots, x_{d+1}])}{\l_d(K)} \right)^{n-d-1} \, \prod_{i=2}^{d+1} \dint x_i
\\ &=
b_d \l_d(K)^{-d} \int\limits_{\cH_d} \int\limits_{(K \cap H)^d}  \left(1-\frac{\l_d([X_1, x_2,\dots, x_{d+1}])}{\l_d(K)} \right)^{n-d-1} \l_{d-1}([x_2, \dots, x_{d+1}])\prod_{i=2}^{d+1} \dint x_i\, \dint H
\end{align*}
Denote by $h_1$ the distance of $X_1$ to the hyperplane $H$. Then the volume of the simplex is given by the area $\l_{d-1}([x_2,\dots, x_{d+1}]) $ and its height $h_1$.
\begin{align*}
\P 
& 
([X_{1},\ldots,X_{{d+1}}]^o \cap \xi_n = \emptyset | X_1)
\\ &=
c(d,K)\int\limits_{\cH_d}  \int\limits_{(K \cap H)^d}  \left(1-\frac{h_1 \l_{d-1}([x_2,\dots, x_{d+1}])}{d \l_d(K)} \right)^{n-d-1} \l_{d-1}([x_2, \dots, x_{d+1}]) \prod_{i=2}^{d+1} \dint x_i\, \dint H
\end{align*}
We need an estimate for this from above. To simplify our notations we identify w.l.o.g. $X_1$ with the origin and thus the distance $h$ of the origin to the hyperplane $H$ equals the distance $h_1$ of $X_1$ to $H$. Then we use the bound $1-x \leq e^{-x}$. 
The integration with respect to the Haar measure on $\cH_d$ is given by the integration for $h $ with respect to Lebesgue measure, and integration with respect to unit normal vector $u$ of $H$ with respect to the Lebesgue measure on $S^{d-1}$. Furthermore, assume that each section $K \cap H$ is contained in a ball $R B^H \subset H$ of radius $R$. Using  Fubini's Theorem gives
\begin{align*}
\P 
& 
([X_{1},\ldots,X_{{d+1}}]^o \cap \xi_n = \emptyset | X_1)
\\ & \leq
c(d,K) \int\limits_{\cH_d}  \int\limits_{(R B^{H})^d}  e^{-\frac{ (n-d+1)\l_{d-1}([x_2,\dots, x_{d+1}])}{d \l_d(K)} h}  \l_{d-1}([x_2,\dots, x_{d+1}])\, \prod_{i=2}^{d+1} \dint x_i\, \dint H
\\ & =
c(d,K) \int\limits_{S^{d-1}} \int\limits_{(R B^{u^\perp})^d} \int\limits_0^\infty  e^{-\frac{ (n-d+1)\l_{d-1}([x_2,\dots, x_{d+1}])}{d \l_d(K)} h}  \l_{d-1}([x_2,\dots, x_{d+1}])\, \dint h\,  \prod_{i=2}^{d+1} \dint x_i\, \dint u  .
\end{align*}
Because the integrand is independent of rotations and the exponential function can easily be integrated we obtain
\begin{align}\label{eq:Pemptyn-1}
\P ([X_{1},\ldots,X_{{d+1}}]^o \cap \xi_n = \emptyset | X_1)
& \leq 
c(d,K) (n-d+1)^{-1} 
\end{align}
and thus we have a uniform bound which is independent of the choice of $X_1$. This immediately proves that there is a constant such that  
\begin{align}\label{eq:boundmaxE}
\E \max_{ i = 1,\ldots,n }  \E (\deg_1(X_{i};\xi_n)|X_{i}) 
 & = \nonumber
\E  \max_{ i = 1,\ldots,n }  \sum_{\{j_1, \dots, j_d\} \in {[n]\setminus\{i\} \choose d}} \P( [X_{i}, X_{j_1}, \dots , X_{j_d}]^o\cap \xi_n= \emptyset|X_{i}) 
\\ & \leq \nonumber
\E  \max_{ i = 1,\ldots,n }  {n-1 \choose d} 
c(d,K) (n-d+1)^{-1} 
\\ &\leq 
c(d,K) n^{d-1} .
\end{align}

\subsubsection{The number of additional simplices for the case $d\geq 3$}\label{sub34}

We need a bound for 
\begin{align*}
\E 
&
\left( 
\sum_{ \{ X_{1},\ldots,X_{d-1} \} \in {\xi_{n-1} \choose d-1}} 
\1([X_{1},\ldots,X_{{d-1}}, X_n, X_{n+1}]^o \cap \xi_{n+1}  = \emptyset)
\right)^{2}  
\\ & = 
\sum \P (([X_{i_1},\ldots,X_{i_{d-1}}, X_n, X_{n+1}]\cup [X_{j_1},\ldots,X_{j_{d-1}}, X_n, X_{n+1}])^o \cap \xi_{n+1}  = \emptyset)
\end{align*}
where the summation is over all pairs of $(d-1)$-tuples $ \{ i_{1},\ldots,i_{d-1} \} , \{ j_{1},\ldots,j_{d-1} \} \in {[n-1] \choose d-1} $.
These pairs will have $l$ points in common for $l=0,1, \dots, d-1$.  The number of pairs with $l$ common points can be counted by first choosing the $l$ common points, and then two disjoint sets for the remaining $(d-l-1)$ points. Using the multinomial coefficient this can be written as
$$
\sum_{ \{ i_{1},\ldots,i_{d-1} \} , \{ j_{1},\ldots,j_{d-1} \} \in {[n-1] \choose d-1} } 
\1( | \{ i_{1},\ldots,i_{d-1} \} \cap  \{ j_{1},\ldots,j_{d-1} \} |=l)
=
{n-1 \choose l,d-l-1,d-l-1}
$$
Since the multinomial coefficient can be estimated by $n^{2d-l-2}$, because the points are identically distributed and both simplices share in addition the two points $X_n, X_{n+1}$, this gives
\begin{align}\label{eq:sumP}
\E 
& \nonumber
\left( 
\sum_{ \{ X_{1},\ldots,X_{d-1} \} \in {\xi_{n-1} \choose d-1}} 
\1([X_{1},\ldots,X_{{d-1}}, X_n, X_{n+1}]^o \cap \xi_{n+1}  = \emptyset)
\right)^{2}  
\\ & \leq  
\sum_{l=0}^{d-1}  n^{2d-l-2} \P (([X_{1},\ldots,X_{d+1}]\cup [X_{d-l},\ldots,X_{2d-l}])^o \cap \xi_{n+1}  = \emptyset) 
.
\end{align}
Note that ultimately, as will become apparent in a moment, we would like to bound \eqref{eq:sumP} by $C(d,K)n^{2d-4}$. Thus, for $l=2,\ldots,d-1$ we can simply bound the probabilities in the respective summands by one. The bound
\begin{align}\label{trivial_bound}
\P
&
(([X_{1},\ldots,X_{d+1}]\cup [X_{d-1},\ldots,X_{2d-1}])^o \cap \xi_{n+1}  = \emptyset) \nonumber
\\ & \leq 
\P ([X_{1},\ldots,X_{d+1}]^o \cap \xi_{n+1}  = \emptyset) 
\\ & \leq 
C(d,K)n^{-1}, \nonumber
\end{align}
following from \eqref{eq:Pemptyn-1}, is a sufficient bound for the probability associated to the summand with $l=1$. However, the probability associated to the summand with $l=0$ needs to be bounded by $C(d,K)n^{-2}$ for us to be able to achieve our goal.

To do so, we transform the probability into a suitable integral, using that the points in $\xi_{n+1}$ are uniformly distributed in $K$.
\begin{align*}
\P 
&
(([X_{1},\ldots,X_{d+1}]\cup [X_{d},\ldots,X_{2d}])^o \cap \xi_{n+1}  = \emptyset) 
\\ & \leq  
\P (([X_{1},\ldots,X_{d+1}]\cup [X_{d},\ldots,X_{2d}])^o \cap \xi_{2d+1,n+1}  = \emptyset) 
\\ & = 
\l_d(K)^{-(2d)} \int\limits_{K^{2d}} \left( 1- \frac{\l_d([x_{1},\ldots,x_{d+1}]\cup [x_{d},\ldots,x_{2d}])}{\l_d(K)} \right)^{n-2d+1}\ \prod_{i=1}^{2d} \dint x_i 
\\ &\leq 
\l_d(K)^{-(2d)} \int\limits_{K^{2d}} e^{-(n-2d+1) \l_d([x_1,\ldots,x_{d+1}]\cup [x_{d},\ldots,x_{2d}]) \l_d(K)^{-1}} \prod_{i=1}^{2d} \dint x_i.
\end{align*}
We further dissect this expression with the help of the inequality $\l_d(A \cup B) \geq \frac{1}{2}(\l_d(A)+\l_d(B))$, for Lebesgue measurable sets $A$ and $B$. 
\begin{align*}
\P 
&
(([X_{1},\ldots,X_{d+1}]\cup [X_{d},\ldots,X_{2d}])^o \cap \xi_{n+1}  = \emptyset) 
\\ & \leq  
\l_d(K)^{-2d} \int\limits_{K^{2d}} e^{-\frac 12 (n-2d+1) (\l_d([x_1,\ldots,x_{d+1}])+\l_d( [x_{d},\ldots,x_{2d}])) \l_d(K)^{-1}} \prod_{i=1}^{2d} \dint x_i.
\end{align*}
We define $H_1=\aff(x_2,\ldots,x_{d+1})$ and $H_2=\aff(x_{d},\ldots,x_{2d-1})$, and parametrize $x_1=(b_1,h_1)$, $x_{2d+1}=(b_2,h_2)$ , with $b_i \in K|_{H_i}$, the projection of $K$ on $H_i$, and $h_i \in H_i^\perp$. As before, we make use of $\l_d([x_1,\ldots,x_{d+1}]) = d^{-1} \l_{d-1}([x_2,\ldots,x_{d+1}]) |h_1|$ as well as $\l_d([x_{d},\ldots,x_{2d}]) = d^{-1} \l_{d-1}([x_{d},\ldots,x_{2d-1}]) |h_2|$. Integrating out the inner integrals and bounding the exponential by one, then yields
\begin{align}\label{eq:P2simplices}
\P 
& \nonumber
(([X_{1},\ldots,X_{d+1}]\cup [X_{d},\ldots,X_{2d}])^o \cap \xi_{n+1}  = \emptyset) 
\\ & \leq  
\l_d(K)^{-2d} \int\limits_{K^{2d-2}} \left( \int\limits_K e^{-\frac 1{2d} (n-2d+1) h_1 \l_{d-1}([x_2,\ldots,x_{d+1}]) \l_d(K)^{-1}} \dint x_1 \right) 
\\ &  \nonumber \hskip4cm
\left( \int\limits_{K} e^{-\frac 1{2d} (n-2d+1) h_2\l_{d-1}( [x_{d},\ldots,x_{2d-1}]) \l_d(K)^{-1}} \dint x_{2d} \right) \prod_{i=2}^{2d-1} \dint x_i
\\ &  \nonumber \leq  
4 d^2  \l_d(K)^{-2(d-1)} \max_{H\in \cH_d} \l_{d-1}(K|_H)^2 \  (n-2d+1)^{-2}
\\ & \hskip4cm \nonumber
\int\limits_{K^{2d-2}}   \l_{d-1}([x_2,\ldots,x_{d+1}])^{-1}
\l_{d-1}( [x_{d},\ldots,x_{2d-1}])^{-1}\ \prod_{i=2}^{2d-1} \dint x_i.
\end{align}
We have  to show that the remaining integrals are finite.
We apply the affine Blaschke-Petkantschin formula of Lemma \ref{lem:HugReitzner}. Note that $H_1$ and $H_2$ are linked via the $2$ points $x_{d}$ and $x_{d+1}$.
\begin{align*}
\P 
&
(([X_{1},\ldots,X_{d+1}]\cup [X_{d},\ldots,X_{2d}])^o \cap \xi_{n+1}  = \emptyset) 
\\ & \leq  
c(d,K) n^{-2} 
\int\limits_{(\cH_d)^2} \int\limits_{(H_1 \cap K)^{d-2}} \int\limits_{(H_1 \cap H_2 \cap K)^2} \int\limits_{(H_2 \cap K)^{d-2}} 
(\sin \varphi)^{-2}  \prod_{j=d+2}^{2d-1} \dint x_j \prod_{j=d}^{d+1} \dint x_j \prod_{j=2}^{d-1} \dint x_j \prod_{i=1}^2 \dint H_i .
\end{align*}
where $\varphi$ denotes the angle between $H_1$ and $H_2$.
Since $K$ is bounded we obtain
\begin{align*}
\P 
&
(([X_{1},\ldots,X_{d+1}]\cup [X_{d},\ldots,X_{2d}])^o \cap \xi_{n+1}  = \emptyset) 
\\ & \leq  
c(d,K) n^{-2} 
\int\limits_{(\cH_d)^2} \1(H_1 \cap H_2 \cap K \neq \emptyset) (\sin \varphi)^{-2}  \prod_{i=1}^2 \dint H_i .
\end{align*}
In Lemma \ref{le:valuesin}, which is postponed to the appendix, we compute an upper bound for this integral which in particular shows that it is finite for $d \geq 3$. Hence continuing with Equation \eqref{eq:sumP} there is a constant such that 
$$
\E 
\left( 
\sum_{ \{ i_{1},\ldots,i_{d-1} \} \in {[n-1] \choose d-1}} 
\1([X_{i_1},\ldots,X_{i_{d-1}}, X_n, X_{n+1}]^o \cap \xi_{n+1}  = \emptyset)
\right)^{2}  
\leq c(d,K) n^{2d-4}	.
$$

Plugging this result, together with \eqref{eq:boundmaxE}, into \eqref{eq:Bound} yields
\begin{align}
\E \deg_{1}(\xi_n) \leq c(d,K) \left(n^{d-1} + n \left(n^{2d-4} \right)^{\frac 12}\right) \leq c(d,K) n^{d-1},
\end{align}
and, hence, concludes  the proof. \qed

\subsubsection{The number of additional simplices for the case $d=2$}

In the case $d=2$ we proceed with \eqref{eq:P2simplices}. Hence we need a bound for 
\begin{align*}
\P 
&
	(([X_{1},X_2,X_{3}]\cup [X_{2},X_3,X_{4}])^o \cap \xi_{n+1}  = \emptyset) 
	 \\ & \leq  
	\l_2(K)^{-4} \int\limits_{K^{2}} \left( \int\limits_K e^{-\frac 1{4} (n-3) h_1 \l_{1}([x_2,x_{3}]) \l_2(K)^{-1}} \dint x_1 \right) 
	 \\ & \hskip4cm
    \left( \int\limits_{K} e^{-\frac 1{4} (n-3) h_2\l_{1}( [x_{2},x_{3}]) \l_2(K)^{-1}} \dint x_{4} \right) \prod_{i=2}^{3} \dint x_i
	 \\ & \leq  
	16  \l_2(K)^{-2} \max_{H\in \cH_{1}^2} \l_{1}(K|_H)^2 \  (n-3)^{-2}
	 \\ & 
	 \int\limits_{K^{2}}   \Big(1- e^{-\frac 1{4} (n-3) D\l_{1}( [x_{2},x_{3}]) \l_2(K)^{-2}}\Big )^2
     \l_{1}( [x_{2},x_{3}])^{-2}\ \prod_{i=2}^{3} \dint x_i.
\end{align*}
where $D$ denotes the diameter of $K$.
We apply the affine Blaschke-Petkantschin formula from Lemma \ref{lem:BPF}.
\begin{align*}
\P 
&
	(([X_{1},\ldots,X_{d+1}]\cup [X_{d},\ldots,X_{2d}])^o \cap \xi_{n+1}  = \emptyset) 
    \\ & \leq  
	b_2 c(2,K) (n-3)^{-2} 
	\int\limits_{(\cH_{1}^2)^2} \int\limits_{(H \cap K)^{2}}
	\Big(1- e^{-\frac 1{4} (n-3) D\l_{1}( [x_{2},x_{3}]) \l_2(K)^{-2}}\Big )^2
	\l_{1}( [x_{2},x_{3}])^{-1}\ \prod_{i=2}^{3} \dint x_i  \dint H .
	\end{align*}
The intersection of $H$ and $K$ in a line segment from, say $a_H$ to $b_H$.  Since $K$ is bounded we can parametrize the line segment such that it is contained in $[0, D]$ for all $H$ using the diameter $D$ of $K$. Thus the inner integration is bounded by 
\begin{align*}
\int\limits_{[0,D]^2} &
	\Big(1- e^{-\frac 1{4} (n-3) D\l_{1}( [x_{2},x_{3}]) \l_2(K)^{-2}}\Big )^2
	\l_{1}( [x_{2},x_{3}])^{-1}\ \prod_{i=2}^{3} \dint x_i  
	\\ &\leq 
	2 \int\limits_{0}^{D} \int\limits_{0}^{x_2 }
	\Big(1- e^{-\frac 1{4} (n-3) D t \l_2(K)^{-2}}\Big )^2
	t^{-1}\ \dint t  \dint x_2  
	\\ &\leq 
	2 D \int\limits_{0}^{D}
	\Big(1- e^{-\frac 1{4} (n-3) D t \l_2(K)^{-2}}\Big )^2
	t^{-1} \ \dint t  
	\\ &\leq 
	2 D 
	\int\limits_{0}^{\frac 1n }
	\Big(\frac 1{4} n D t \l_2(K)^{-2} \Big )^2  t^{-1}\ \dint t  
+
	2 D
	\int\limits_{\frac 1n}^{D} t^{-1} \ \dint t  
	\\ &\leq 
	D \Big(\frac 1{4} D \l_2(K)^{-2} \Big )^2  
	+
	2 D (\ln n + \ln D)
	\leq c(2,K) \ln n
	\end{align*}
We put this into 
\begin{align*}
&\E \left( 
\sum_{ i =1}^{n-1}
\1([X_i, X_n, X_{n+1}]^o \cap \xi_{n+1}  = \emptyset)
\right)^{2}  
\\ & = 
(n-1) \P ([X_1,X_2, X_3]^o \cap \xi_{n+1}  = \emptyset)
+
{ n-1 \choose 2}  \P ([X_1,X_2, X_3]^o\cup [X_2, X_3, X_4]^o \cap \xi_{n+1}  = \emptyset)
\\ &  \leq 
C(2,K) \left((n-1)  n^{-1} 
+
{ n-1 \choose 2} (n-3)^{-2} \ln n\right)
\\ & \leq  
C(2,K)  \ln n
\end{align*}
where  we used \eqref{eq:Pemptyn-1} again. Finally, combining this result with \eqref{eq:boundmaxE} and \eqref{eq:Bound} yields
\begin{align}
\E \deg_{1}(\xi_n) \leq c(2,K) \left(n  + n \ln ^{\frac 12} n\right) \leq c(2,K) n \ln^{\frac 12} n
\end{align}
and, hence, concludes  the proof. \qed

\subsection{Appendix}

We define
$$
\cI(K) = 
\int\limits_{(\cH_d)^2} \1(H_1 \cap H_2 \cap K \neq \emptyset) (\sin \varphi)^{-2}  \prod_{i=1}^2 \dint H_i 
$$
where $\varphi$ denotes the angle between $H_1$ and $H_2$.

\begin{lemma}\label{le:valuesin} Assume $K$ is a contained in $R B^d$. Then for $ d \geq 3$ we have
$$
\cI(K)  \leq 
R  \frac {d(d-1)\kappa_d \k_{d-1}}{d-2}  {\bf B} \left(\frac{d}2,\frac{1}2\right) 
	$$
where  $\bf B$ denotes the Beta-function.
\end{lemma}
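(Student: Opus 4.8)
The plan is to replace $K$ by its enclosing ball and then to parametrise the two hyperplanes by a unit normal and a signed distance, so that the weight $(\sin\varphi)^{-2}$ depends only on the normals while the membership condition is governed by the distances. First I would use the monotonicity of the integrand in $K$ together with $K\subset RB^d$ to bound $\cI(K)\leq\cI(RB^d)$, replacing the condition $H_1\cap H_2\cap K\neq\emptyset$ by the larger event $H_1\cap H_2\cap RB^d\neq\emptyset$. Writing $H_i=\{x:\langle x,u_i\rangle=t_i\}$ with $u_i\in S^{d-1}$ and $t_i\in\R$, the Haar measure factorises as $\dint H_i=c_d\,\dint t_i\,\dint u_i$ (with $c_d$ fixed by the normalisation $\mu_{d-1}(\{H:H\cap B^d\neq\emptyset\})=2$), and $\varphi$ is the angle between $u_1$ and $u_2$. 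This lets me split the fourfold integral into an inner integration over $(t_1,t_2)$ and an outer integration over $(u_1,u_2)$.

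The key step is the inner integral. For fixed $u_1,u_2$ the flat $L=H_1\cap H_2$ is $(d-2)$-dimensional, and it meets $RB^d$ iff the point of $L$ nearest the origin lies in $RB^d$. Minimising $|x|^2$ subject to the two linear constraints (the minimiser lies in $\lin\{u_1,u_2\}$) gives the explicit condition
\[
	t_1^2 - 2\cos\varphi\,t_1 t_2 + t_2^2 \leq R^2\sin^2\varphi .
\]
This region is an ellipse in the $(t_1,t_2)$-plane whose defining quadratic form has determinant $\sin^2\varphi$, so its area equals $\pi R^2\sin\varphi$. Hence the inner integral contributes a factor proportional to $R^2\sin\varphi$, which cancels one power of the weight and leaves $(\sin\varphi)^{-1}$.

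It then remains to integrate $(\sin\varphi)^{-1}$ over $(u_1,u_2)\in(S^{d-1})^2$. By rotation invariance I would fix $u_1$ (contributing the surface area $d\kappa_d$ of $S^{d-1}$) and pass to polar coordinates about $u_1$ for $u_2$, using $\int_{S^{d-1}}g(\varphi)\,\dint u_2=(d-1)\kappa_{d-1}\int_0^\pi g(\varphi)\sin^{d-2}\varphi\,\dint\varphi$. This reduces the angular integral to $\int_0^\pi\sin^{d-3}\varphi\,\dint\varphi={\bf B}\!\left(\tfrac{d-2}{2},\tfrac12\right)$, which is finite precisely when $d-3>-1$, i.e. for $d\geq3$; this is exactly where the dimensional restriction enters. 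Collecting the constants and rewriting the Beta factor by the reduction ${\bf B}(\tfrac{d-2}{2},\tfrac12)=\tfrac{d-1}{d-2}{\bf B}(\tfrac d2,\tfrac12)$ then yields a bound of the stated form.

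The main obstacle I anticipate is twofold: carrying out the nearest-point computation cleanly to identify the exact ellipse and its area — the cancellation of precisely one power of $\sin\varphi$ here is what makes the angular integral borderline convergent — and bookkeeping the normalisation constant $c_d$ of the Haar measure $\mu_{d-1}$ so that the final numerical factor (and the power of $R$) matches. The convergence issue at $\varphi\to0,\pi$ is genuine and is the reason the lemma is restricted to $d\geq3$.
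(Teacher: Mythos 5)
Your proposal is correct and follows the paper's overall skeleton --- replace $K$ by $RB^d$, write $\dint H_i=\dint t_i\,\dint u_i$, fix $u_1$ by rotation invariance, integrate out the distance variables so that exactly one power of $\sin\varphi$ is produced to cancel against $(\sin\varphi)^{-2}$, and finish with the angular integral $\int_0^\pi\sin^{d-3}\varphi\,\dint\varphi={\bf B}\left(\frac{d-2}{2},\frac{1}{2}\right)$, whose convergence is precisely the restriction $d\geq 3$ --- but your execution of both main steps differs from the paper's, in each case being sharper. For the distance integration the paper argues crudely: the slice $(t_1e_d+e_d^\perp)\cap RB^d$ is contained in a $(d-1)$-ball of radius $R$, which forces $t_2\leq R\sin\varphi$, so the $t_2$-integration yields $R\sin\varphi$. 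Your nearest-point computation is exact: the minimiser does lie in $\lin\{u_1,u_2\}$, the condition $t_1^2-2\cos\varphi\,t_1t_2+t_2^2\leq R^2\sin^2\varphi$ is correct, and the ellipse area is indeed $\pi R^2\sin\varphi$ (the defining form has determinant $\sin^2\varphi$), so you obtain the single factor $\sin\varphi$ in one stroke. For the angular integral the paper rewrites $(\sin\varphi)^{-1}=\|u|_{\R^{d-1}}\|^{-1}$ and converts the spherical integral into a ball integral via homogeneity and Fubini over parallel slices, while you use the standard zonal decomposition $\dint u=(d-1)\kappa_{d-1}\sin^{d-2}\varphi\,\dint\varphi$; both routes reduce to the same Beta integral, and your reduction ${\bf B}\left(\frac{d-2}{2},\frac{1}{2}\right)=\frac{d-1}{d-2}{\bf B}\left(\frac{d}{2},\frac{1}{2}\right)$ is correct.

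One point where you should not try to force agreement with the stated constant: your route yields a bound proportional to $R^2$, not $R$, and yours is the correct power. Since $(\sin\varphi)^{-2}$ is scale invariant and the measure of $\{H:H\cap RB^d\neq\emptyset\}$ scales linearly in $R$, one has $\cI(RB^d)=R^2\,\cI(B^d)$, so no bound linear in $R$ can hold for all $R$; the single $R$ in the lemma comes from slips in the paper's own bookkeeping (the factor $R$ from the $t_1$-integration is silently dropped, and a factor $(d-1)$ from the homogeneity step is lost as well). None of this matters downstream: all that is used in the proof of Theorem \ref{deg_k} is that $\cI(K)<\infty$ for fixed $K$ when $d\geq 3$, which both your argument and the paper's deliver.
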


The hyperplanes $H_i$ are parametrized by their unit normal vector $u_i$ and their distance $t_i$ to the origin. The Haar measure on $\cH_d$ is given by $\dint H_i= \dint t_i \dint u_i$ where $\dint t_i$ is integration with respect to Lebesgue measure on the postive hull $\mathrm{pos}\{u_i\}$ and $\dint u_i$ with respect to Lebesgue measure on $S^{d-1}$. 
By rotational invariance we have 
\begin{align*}
\cI(K) 
& =
\int\limits_{(S^{d-1})^2} \int\limits_{\R_+^2} 
\1((t_1  u_1 + u_1^\perp) \cap (t_2 u_2 + u_2^\perp) \cap R B^d \neq \emptyset) (\sin \varphi(u_1, u_2))^{-2}  \prod_{i=1}^2 \dint t_i  \prod_{i=1}^2 \dint u_i 
\\ & =
d \kappa_d 
\int\limits_{S^{d-1}} \int\limits_{\R_+^2} 
\1((t_1  e_d + e_d^\perp) \cap (t_2 u_2 + u_2^\perp) \cap R B^d \neq \emptyset) (\sin \varphi(e_d, u_2))^{-2}  \prod_{i=1}^2 \dint t_i  \ \dint u_2 .
\end{align*}
Because $(t_1  e_d + e_d^\perp) \cap R B^d $ is contained in a $(d-1)$-dimensional ball of radius $R$ for $t_1 \leq R$, we can estimate this integral by
\begin{align*}
\cI(K) \leq 
d \kappa_d 
\int\limits_{S^{d-1}} \int\limits_{\R_+} 
\1((t_2 u_2 + u_2^\perp) \cap R B^{d-1} \neq \emptyset) (\sin \varphi(e_d, u_2))^{-2}  \prod_{i=1}^2 \dint t_i  \ \dint u_2 .
\end{align*}
The indicator function equals one only if $t_2 \leq R \sin \varphi(e_d, u_2)$. Hence integrating over $\R_+$ gives
\begin{align}\label{eq:sin-int}
\cI(K)  \leq 
d \kappa_d R
\int\limits_{S^{d-1}} (\sin \varphi(e_d, u_2))^{-1}  \ \dint u_2 
=
d \k_d R 
\int\limits_{S^{d-1}} \| u_2|_{\R^{d-1}}\|^{-1 }  \dint u_2
\end{align}
Here we used that 
$ 
\sin \sphericalangle (e_d, u)= \| u|_{\R^{d-1}}\| 
$ 
for $u \in S^{d-1}$, where $u|_{\R^{d-1}}$ denotes the orthogonal projection of $u$ onto the coordinate hyperplane given by $e_d^\perp$. 

Next we use that for a function $f:\R^d \to \R$ which is homogeneous of degree $a>-d$, i.e. $f(tx)=t^af(x)$, application of polar coordinates and then Fubini's Theorem gives
$$
\frac 1{d+a}  \int\limits_{S^{d-1}} f(u) \, \dint u =
\int\limits_{S^{d-1}} \int\limits_0^1 f(u) r^{a+d-1} dr du= 
\int\limits_{B^d} f(x) \, \dint x= 
\int\limits_{-1}^1 \int\limits_{B_z} f(y) \, \dint y \dint z
$$
where $B_z$ is the intersection of $B^d$ with the hyperplane $\{ \langle x, e_d \rangle =z \}$ which is a $(d-1)$-dimensional ball of radius $(1-z^2)^{\frac 12}$. 
We apply this to the function 
$f(x)=\| x|_{\R^{d-1}}\|^{-1 }$,
$$
\int\limits_{S^{d-1}} \| u|_{\R^{d-1}}\|^{-1 } \dint u =
(d-1) \int\limits_{-1}^1 \int\limits_{B_z} \| y|_{\R^{d-1}}\|^{-1}  \, \dint y \dint z,
$$
for $d-1>0$.
Introducing again polar coordinates in $B_z$ yields
$$
\int_{B_z} \| y|_{\R^{d-1}}\|^{-1 } dy =
\int_{S^{d-2}} \int_0^{\sqrt{1-z^2}} r^{d-3 }  dr du =
\frac {(d-1)\k_{d-1}}{d-2} (1-z^2)^{\frac{d-2}2}  
$$
for $d-2 >0$. 
and hence
\begin{align*}
\int\limits_{S^{d-1}} \| u|_{\R^{d-1}}\|^{-1 }  \dint u
& =
\frac {(d-1)\k_{d-1}}{d-2} 
\int\limits_{-1}^1  (1-z^2)^{\frac{d-2}2}   \, \dint z  
\\ & =
 \frac {(d-1)\k_{d-1}}{d-2} 
\int\limits_{0}^1  (1-t)^{\frac{d-2}2} t^{- \frac 12}  \, \dint t  
\\ & =
 \frac {(d-1)\k_{d-1}}{d-2}  {\bf B} \left(\frac{d}2,\frac{1}2\right) . 
\end{align*}
We combine this with \eqref{eq:sin-int} and finally obtain
\begin{align*}
\cI(K)  \leq 
R  \frac {d(d-1)\kappa_d \k_{d-1}}{d-2}  {\bf B} \left(\frac{d}2,\frac{1}2\right) 
\end{align*}
which proves the lemma.\qed

\section*{Acknowledgement}
Part of this work was done during a stay of the first author at the Case Western Reserve University in Cleveland, by invitation by Elisabeth Werner. I thank her for her hospitality, and Mark Meckes for many helpful discussions.

We are indebted to an anonymous referee who pointed out a gap in the prove of subsection \ref{sub34}.

Matthias Reitzner was supported by the DFG via RTG 1916 {\it Combinatorial Structures in Geometry}. Daniel Temesvari was supported by the DFG  via RTG 2131 {\it High-Dimensional Phenomena in Probability -- Fluctuations and Discontinuity}.

\vspace{1cm}

\footnotesize

\textsc{Matthias Reitzner:} Institut f\"ur Mathematik, Universit\"at Osnabr\"uck, Germany \\
\textit{E-mail}: \texttt{matthias.reitzner@uni-osnabrueck.de}

\bigskip

\textsc{Daniel Temesvari:} Institut f\"ur Diskrete Mathematik und Geometrie, Technische Universit\"at Wien, Austria\\
\textit{E-mail}: \texttt{daniel.temesvari@tuwien.ac.at}

\end{document}